\newcommand{\Prob}[1]{{\rm Prob}\left(#1\right)}
\newcommand{\bburl}[1]{\textcolor{blue}{\url{#1}}}
\newcommand{\kommentar}[1]{}
\newcommand\be{\begin{equation}}
\newcommand\ee{\end{equation}}
\newcommand\bea{\begin{eqnarray}}
\newcommand\eea{\end{eqnarray}}
\newcommand\bi{\begin{itemize}}
\newcommand\ei{\end{itemize}}
\newcommand\ben{\begin{enumerate}}
\newcommand\een{\end{enumerate}}
\newcommand\bc{\begin{center}}
\newcommand\ec{\end{center}}
\newcommand\ba{\begin{array}}
\newcommand\ea{\end{array}}
\newtheorem{thm}{Theorem}[section]
\newtheorem{cor}[thm]{Corollary}
\newtheorem{lem}[thm]{Lemma}
\newtheorem{defi}[thm]{Definition}
\theoremstyle{definition}
\newtheorem{rek}[thm]{Remark}
\numberwithin{equation}{section}
\title{Benford's Law Beyond Independence: Tracking Benford Behavior in Copula Models}
\author{Rebecca F. Durst}
\email{\textcolor{blue}{\href{mailto:rfd1@williams.edu}{rfd1@williams.edu}}}
\address{Department of Mathematics and Statistics, Williams College, Williamstown, MA 01267}
\curraddr{Department of Mathematics, Brown University, Providence, RI 02912}
\author{Steven J. Miller}\email{\textcolor{blue}{\href{mailto:sjm1@williams.edu}{sjm1@williams.edu}},  \textcolor{blue}{\href{Steven.Miller.MC.96@aya.yale.edu}{Steven.Miller.MC.96@aya.yale.edu}}}
\address{Department of Mathematics and Statistics, Williams College, Williamstown, MA 01267}
\begin{document}

\begin{abstract}
Benford's law describes a common phenomenon among many naturally occurring data sets and distributions in which the leading digits of the data are distributed with the probability of a first digit of $d$ base $B$ being $\log_{B}{\frac{d+1}{d}}$. As it often successfully detects fraud in medical trials, voting, science and finance, significant effort has been made to understand when and how distributions exhibit Benford behavior. Most of the previous work has been restricted to cases of independent variables, and little is known about situations involving dependence. We use copulas to investigate the Benford behavior of the product of $n$ dependent random variables. We develop a method for approximating the Benford behavior of a product of $n$ dependent random variables modeled by a copula distribution $C$ and quantify and bound a copula distribution's distance from Benford behavior. We then investigate the Benford behavior of various copulas under varying dependence parameters and number of marginals. Our investigations show that the convergence to Benford behavior seen with independent random variables as the number of variables in the product increases is not necessarily preserved when the variables are dependent and modeled by a copula. Furthermore, there is strong indication that the preservation of Benford behavior of the product of dependent random variables may be linked more to the structure of the copula than to the Benford behavior of the marginal distributions.

\end{abstract}

\maketitle

\tableofcontents

\section{Introduction}\label{sectionone}


Benford's law of digit bias applies to many commonly encountered data sets and distributions. A set of data $\{x_{i}\}_{i\in I}$ is said to be \textit{Benford base} $B$ if the probability of observing a value $x_{i}$ in the set with the first digit $d$ (where $d$ is any integer from $1$ to $B-1$) is given by the equation
\begin{equation}\label{eq: BenfProb}
\Prob{{\rm first \ digit \ of} \{x_{i}\}_{i \in I} \ {\rm is}\ d}\ {\rm base \ B} \ = \ \log_{B}\left(\frac{d+1}{d}\right).
\end{equation} These probabilities monotonically decrease, from base 10 about 30.103\% of the time having a leading digit of 1 to about 4.576\% of the time starting with a 9.

Benford's law was discovered in 1881 by the astronomer-mathematician Simon Newcomb who, looking at his logarithm table, observed earlier pages were more heavily worn than later pages. As logarithm tables are organized by leading digit, this led him to conclude that values with leading digit 1 occurred more commonly than values with higher leading digits. These observations were mostly forgotten for fifty years, when Benford \cite{Benford} published his work detailing similar biases in a variety of settings. Since then, the number of fields where Benford behavior is seen has rapidly grown, including accounting, biology, computer science, economics, mathematics, physics and psychology to name a few; see \cite{BH1, BH2, Miller, Nig1, Rai} for a development of the general theory and many applications. This prevalence of Benford's law, particularly in naturally occurring data sets and common distributions, has allowed it to become a useful tool in detecting fraud. One notable example of this was its in 2009 to find evidence suggesting the presence of fraud in the Iranian elections \cite{Battersby}; while Benford's law cannot prove that fraud happened, it is a useful tool for determining which sets of data are suspicious enough to merit further investigation (which is of great importance given finite resources); see for example \cite{Nig2, Singleton}.


To date, most of the work on the subject has involved independent random variables or deterministic processes (see though \cite{B--, IMS} for work on dependencies in partition problems). Our goal below is to explore dependent random variables through copulas, quantifying the connections between various relations and Benford behavior.

Copulas are multivariate probability distributions restricted to the unit hypercube by transforming the marginals into uniform random variables via the probability integral transform (see Section \ref{sec: sectiontwo} for precise statements). The term copulas was first defined by Abe Sklar in 1959, when he published what is now known as \textit{Sklar's Theorem} (see Theorem \ref{thm: sklar}), though similar objects were present in the work of Wassily Hoeffding as early as 1940. Sklar described their purpose as linking $n$-dimensional distributions with their one-dimensional margins. See \cite{Nelsen} for a detailed account of the presence and evolution of copulas.

Quoting the \textit{Encyclopedia of Statistical Sciences}, Nelsen \cite{Nelsen} writes: ``Copulas [are] of interest to statisticians for two main reasons: Firstly, as a way of studying scale-free measures of dependence; and secondly, as a starting point for constructing families of bivariate distributions, sometimes with a view to simulation.'' More specifically, copulas are widely used in application in fields such as economics and actuarial studies; for example, Kpanzou \cite{Kpanzou} describes applications in survival analysis and extreme value theory, and Wu \cite{Wu} details the use of Archimedean copulas in economic modeling and risk management. Thus, as copulas are a convenient and useful way to model dependent random variables, they are often employed in fields relating to finance and economics. Since many of these areas are also highly susceptible to fraud, it is worth exploring connections between copulas and Benford's law, with the goal to develop data integrity tests.

Essentially, since so many dependencies may be modeled through copulas, it is natural to ask when and how often these structures will display Benford behavior. In this paper, we investigate when data modeled by a copula is close to Benford's law by developing a method for approximating Benford behavior. In Section \ref{sec: sectionthree}, we develop this method for the product of $n$ random variables whose joint distribution are modeled by the copula $C$. We then apply this method in Section \ref{sec: sectionthreepointfive} to directly investigate Benford behavior for various copulas and dependence parameters. We conclude that Benford behavior depends heavily on the structure of the copula. Figure \ref{fig:intro} shows a small subset of the results covered in Section \ref{sec: sectionthreepointfive} that display the changing $\chi^{2}$ values of three different copulas compared to a Benford distribution as their dependence parameters vary. These three plots indicate how three different copulas modeling the same marginals may display drastically different behavior. Furthermore, Figure \ref{fig:intro2} shows the $\chi^{2}$ values of a copula compared to a Benford distribution as the number of marginals increases. As we will show, the behavior seen in this plot indicates that the product of many random variables with dependence modeled by a copula will not necessarily level-off like products of independent random variables, the log of which we may expect to become more uniform as the number of variables increases.

\begin{figure}[h]
\includegraphics[width=\linewidth]{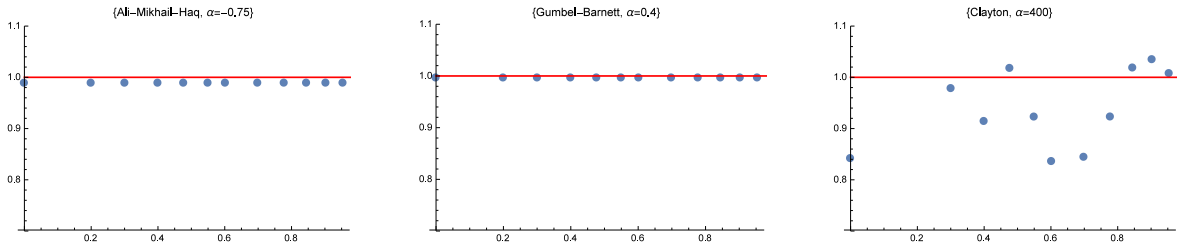}
\caption{The $y$-axes of these three plots represent the approximate values of the copula PDF of $\log_{10}{XY} \bmod 1$ at various values of $x \in [0,1]$, where $X$ and $Y$ are the marginal distributions. In each case, the marginal distributions are $\rm{N}(0,1)$, and $\rm{Pareto}(1)$. A Benford variable is equivalent to $\log_{10}{XY} \bmod 1 = 1$ at all points, thus the red line at $y=1$ represents the PDF of a Benford variable. We clearly see that the first two cases are close to Benford at all points. The third case, however, is highly variable and therefore does not display Benford behavior.} \label{fig:intro}
\end{figure}
\FloatBarrier

\begin{figure}[h]
\includegraphics[width=0.4\linewidth]{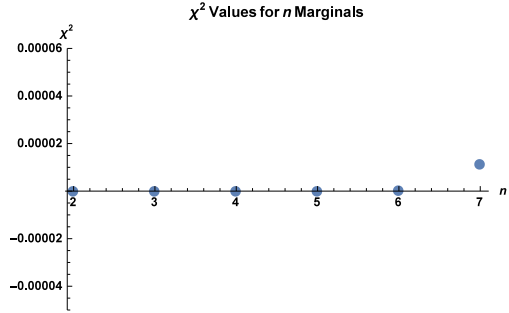}
\caption{The $\chi^{2}$ values comparing the behavior of the product to a Benford PDF as the number of marginals increases.  We have $8$ degrees of freedom and a significance level of $0.005$, so we reject the hypothesis if the value exceeds $1.3$.}\label{fig:intro2}
\end{figure}
\FloatBarrier

The results of this paper extend current techniques for testing Benford's law to situations where independence is not guaranteed, allowing analyses like that carried out by Cuff et. al. \cite{Cuff} on the Weibull and Durst et. al. \cite{D--} on the Inverse Gamma distributions to be conducted in the case of $n$ dependent random variables. In Section \ref{sec: sectionfour}, we restrict ourselves to $n$-tuples of random variables in which at least one is a Benford distribution and develop a concept of distance between our joint distribution and a Benford distribution, thus developing a concept of distance from a Benford distribution in order to understand how much deviation from Benford one might expect of a particular distribution. We then provide an upper bound for this distance using the $L^{1}$ norm of the function $N(u_{1},u_{2},\dots,u_{n}) = 1-\frac{\partial^{n}C(u_{1},u_{2},\dots,u_{n})}{\partial{u_1}\partial{u_2}\dots \partial{u_n}}$. In doing so, we draw an interesting connection between the distance from a Benford distribution and a copula's distance from the space of copulas for which $C_{uv}(u,v)=1$ for all $u,v$ in $[0,1]$.

\section{Terms and Definitions}\label{sec: sectiontwo}

We abbreviate \textit{probability density functions} by PDFs and \textit{cumulative distribution functions} as CDFs, and assume all CDFs are uniformly or absolutely continuous. All results below are standard; see the references for proofs.

\subsection{General Mathematics and Benford's Law}

\begin{lem}[Barbalat's Lemma (Lemma 2.1 in \cite{Fontes})]\label{thm: barbalat}
Let $t \to F(t)$ be a differentiable function with a finite limit as $t \to \infty$. If $F'$ is uniformly continuous, then $F'(t) \to 0$ as $t \to \infty$.
\end{lem}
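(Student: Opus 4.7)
The plan is to argue by contradiction. Suppose $F'(t) \not\to 0$ as $t \to \infty$. Then there exist $\epsilon > 0$ and a sequence $t_n \to \infty$ such that $|F'(t_n)| \geq \epsilon$ for all $n$. The goal is to manufacture from this a lower bound on certain small increments of $F$ that is incompatible with $F$ having a finite limit.

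First I would invoke the uniform continuity of $F'$ to produce a single $\delta > 0$, independent of $n$, such that $|F'(t) - F'(s)| < \epsilon/2$ whenever $|t - s| < \delta$. Applied at $s = t_n$, this gives $|F'(t)| \geq \epsilon/2$ for all $t \in [t_n, t_n + \delta]$, and moreover $F'(t)$ has the same sign as $F'(t_n)$ throughout that interval (since it cannot cross zero without first dropping by more than $\epsilon/2$). This is the only place the uniform continuity hypothesis is used, and it is the crucial step: without it the sign or magnitude of $F'$ could oscillate wildly near $t_n$ and the next estimate would fail.

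Next I would estimate the increment of $F$ over $[t_n, t_n + \delta]$. By the fundamental theorem of calculus,
\[
F(t_n + \delta) - F(t_n) \;=\; \int_{t_n}^{t_n + \delta} F'(t)\, dt,
\]
and because $F'$ does not change sign on this interval and has absolute value at least $\epsilon/2$, the right-hand side has absolute value at least $\epsilon \delta / 2$. Hence
\[
|F(t_n + \delta) - F(t_n)| \;\geq\; \frac{\epsilon \delta}{2}
\]
for every $n$.

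Finally I would use the hypothesis that $F$ has a finite limit $L$ at infinity. Since $t_n \to \infty$ and $t_n + \delta \to \infty$, both $F(t_n)$ and $F(t_n + \delta)$ converge to $L$, so $F(t_n + \delta) - F(t_n) \to 0$. This contradicts the uniform lower bound $\epsilon \delta/2 > 0$ just obtained, and the contradiction proves $F'(t) \to 0$. I do not expect any serious obstacle; the only subtle point is ensuring that $F'$ is controlled uniformly on an interval of fixed length to the right of each $t_n$, which is exactly what uniform (as opposed to merely pointwise) continuity of $F'$ delivers.
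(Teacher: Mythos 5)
Your proof is correct and complete: it is the standard contradiction argument for Barbalat's Lemma, and each step (extracting the sequence $t_n$, using uniform continuity to keep $|F'|\geq \epsilon/2$ with constant sign on an interval of fixed length $\delta$, bounding the increment below by $\epsilon\delta/2$, and contradicting the Cauchy property of $F$ at infinity) is sound. The paper itself offers no proof to compare against --- it quotes this as a standard result from the cited reference [FM] --- so there is nothing further to reconcile.
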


\begin{defi}[Scientific Notation]
Any real number, $x$, can be written in the form
\begin{equation}
x \ = \ S_{B}(x) \cdot B^{n},
\end{equation}
where $n$ is an integer. We call $B$ the \textbf{base} and $S_{B}(x)$ the \textbf{significand}.
\end{defi}

We define strong Benford's law base $B$ (see, for example, \cite{BH2, Miller}). This is the definition we primarily use in Section \ref{sec: sectionthree}; strong indicates that we are studying the entire significand of the number and not just its first digit. In Section \ref{sec: sectionfour}, we will provide insight into how one may define a weaker version of Benford's law that permits the probabilities to be within $\epsilon$ of the theoretical Benford probabilities.

\begin{defi}[Strong Benford's Law (see  Definition 1.6.1 of \cite{Miller})]
A data set satisfies the Strong Benford's Law Base $B$ if the probability of observing a leading digit of at most $s$ in base $B$ is $\log_{B}{s}$.
\end{defi}

\begin{thm}[Absorptive Property of Benford's Law (see page 56 of \cite{Tao}).]\label{thm:benfabs}
Let $X$ and $Y$ be \textit{independent} random variables. If $X$ obeys Benford's law, then the product $W = XY$ obeys Benford's law regardless of whether or not $Y$ obeys Benford's law.
\end{thm}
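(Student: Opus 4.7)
The plan is to reduce the statement to the standard mantissa formulation of Benford's law and then exploit a convolution/Fourier argument on the circle $\RR/\ZZ$. Concretely, a positive random variable $X$ is Benford base $B$ if and only if the fractional part $U := \log_B X \bmod 1$ is uniformly distributed on $[0,1]$, so I would first reformulate the hypothesis and conclusion in terms of the distributions of $U := \log_B X \bmod 1$, $V := \log_B Y \bmod 1$, and their sum modulo $1$. (Benford's law concerns only the significand, so without loss of generality I may take $X, Y > 0$; sign issues are handled by conditioning.) Note that $U$ and $V$ are independent because $X$ and $Y$ are, and $\log_B(XY) \bmod 1 = (U + V) \bmod 1$, so the entire theorem reduces to showing that if $U$ is uniform on $[0,1]$ and $V$ is any independent $[0,1]$-valued random variable, then $(U+V) \bmod 1$ is uniform on $[0,1]$.

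The cleanest way to verify this is via Fourier coefficients on the circle. For each nonzero integer $k$, I would compute
\begin{equation}
\mathbb{E}\bigl[e^{2\pi i k (U+V)}\bigr] \ = \ \mathbb{E}\bigl[e^{2\pi i k U}\bigr]\,\mathbb{E}\bigl[e^{2\pi i k V}\bigr],
\end{equation}
using independence of $U$ and $V$. Since $U$ is uniform on $[0,1]$, its $k$-th Fourier coefficient $\mathbb{E}[e^{2\pi i k U}] = \int_0^1 e^{2\pi i k u}\,du$ vanishes for every $k \neq 0$. Hence all nonzero Fourier coefficients of the distribution of $(U+V) \bmod 1$ vanish, and by Fourier uniqueness on $\RR/\ZZ$ the distribution must equal the uniform distribution on $[0,1]$. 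Translating this back via the mantissa characterization yields that $W = XY$ is Benford base $B$.

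As a sanity check I would also sketch the equivalent convolution argument: the density of $(U+V) \bmod 1$ is the circular convolution $f_U * f_V$, and since $f_U \equiv 1$ on $[0,1]$, this convolution evaluates to $\int_0^1 f_V(t)\,dt = 1$, recovering the uniform density directly. The main obstacle here is really just conceptual rather than technical: making precise the passage between the multiplicative structure on $(0,\infty)$ and the additive structure on $\RR/\ZZ$, and verifying that Benford's law is genuinely equivalent to uniformity of $\log_B X \bmod 1$ (a statement proved in the references cited in the paper, e.g., \cite{BH2, Miller}). Once that equivalence is invoked, the argument is a one-line Fourier calculation, which is why the theorem is often stated as the ``absorptive'' property of Benford's law.
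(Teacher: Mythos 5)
Your proof is correct, and it is essentially the canonical argument: the paper itself does not prove this theorem but quotes it as a standard result from the cited reference, where the proof is exactly your reduction to uniformity of $\log_B X \bmod 1$ followed by the circular convolution (equivalently, vanishing Fourier coefficients) computation. Nothing further is needed beyond the implicit standing assumption that $Y \neq 0$ almost surely so that the significand of $XY$ is defined.
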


\subsection{Copulas}
All theorems and definitions in this section are from Nelsen \cite{Nelsen} unless otherwise stated.

\begin{rek} In \cite{Nelsen}, functions are defined on the \textbf{extended real line}, $[-\infty,\infty]$; thus $f(t)$ is defined when $t=\pm \infty$. We use this notation in order to maintain consistency with \cite{Nelsen}, as this is one of the central texts in copula theory.
\end{rek}

\begin{defi}[$n$-Dimensional Copula]
An \textbf{n-dimensional copula}, $C$, is a function satisfying the following properties:
\begin{enumerate}
\item the domain of $C$ is $[0,1]^{n}$,
\\
\item ($n$-increasing) the $n$\textsuperscript{th}-order difference of $C$ is greater than or equal to zero,
\\
\item (grounded) $C(u_{1},u_{2},\dots,u_{n})=0$ if $u_{k}=0$ for at least one $k$ in $\{1,2,\dots,n\}$, and
\\
\item $C(1,1,\dots,1,u_{k},1,,\dots,1)=u_{k}$ for some $k$ in $\{1,2,\dots,n\}$.
\end{enumerate}
\end{defi}

\begin{thm}[Sklar's Theorm (Theorem 2.10.9 in \cite{Nelsen})]\label{thm: sklar}
Let $H$ be a $n$-dimensional distribution function with marginal CDFs $F_{1}, F_{2}, \dots,F_{n}$. Then there exists a $n$-copula $C$ such that for all $(x_{1},x_{2},\dots,x_{n})$ in $[-\infty,\infty]^{n}$,

\begin{equation}\label{thm: forSklar}
H(x_{1},x_{2},\dots,x_{n}) \ = \ C(F_{1}(x_{1}),f_{2}(x_{2}),\dots,F_{n}(x_{n})).
\end{equation}
If all $F_{i}$ are continuous, then $C$ is unique; otherwise, $C$ is uniquely determined on ${\rm Range}(F_{1}) \times {\rm Range}(F_{2})\times \cdots \times {\rm Range}(F_{n})$. Conversely, if $C$ is a copula and $F_{1},F_{2},\dots,F_{n}$  are cumulative distribution functions, then the function $H$ defined by (\ref{thm: forSklar}) is a  distribution function with marginal cumulative distribution functions $F_{1},F_{2},\dots,F_{n}$.
\end{thm}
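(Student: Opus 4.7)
The plan is to prove the existence half of Sklar's Theorem in two stages, cleanly handling the continuous case first and then bootstrapping to the general case, after which uniqueness on the range and the converse direction will fall out readily.

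First, I would treat the case in which every marginal $F_i$ is continuous. For each $i$ define the quantile function $F_i^{-1}(u)=\inf\{x\in[-\infty,\infty] : F_i(x)\ge u\}$ and set
\[
 C(u_1,\dots,u_n) \ := \ H\bigl(F_1^{-1}(u_1),\dots,F_n^{-1}(u_n)\bigr).
\]
The verification that $C$ is a copula is a direct check of the four defining properties: the domain is $[0,1]^n$ by construction; groundedness follows because $u_k=0$ forces $F_k^{-1}(u_k)=-\infty$ and $H$ vanishes when any coordinate tends to $-\infty$; the marginal identity $C(1,\dots,1,u_k,1,\dots,1)=u_k$ uses $F_k(F_k^{-1}(u_k))=u_k$, which is precisely where continuity is required; and the $n$-increasing property is inherited from the fact that $n$-dimensional rectangle probabilities under $H$ are nonnegative. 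To see the defining identity \eqref{thm: forSklar}, substitute $u_i=F_i(x_i)$ and use continuity of $F_i$ to conclude $F_i^{-1}(F_i(x_i))$ lies in the level set of $F_i$ at $x_i$, so $H$ takes the same value there as at $x_i$ (this uses that $H$ is constant on level sets of each marginal, a consequence of $n$-increasingness combined with almost-sure equality of coordinates that share a value of $F_i$).

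Second, I would remove the continuity assumption. The main obstacle is precisely this step: when $F_i$ has a jump, $\text{Range}(F_i)$ is a proper subset of $[0,1]$, and the recipe above only pins down $C$ on $\text{Range}(F_1)\times\cdots\times\text{Range}(F_n)$. My strategy is the classical smoothing trick: replace each $F_i$ by a distorted continuous CDF $\tilde F_i$ that agrees with $F_i$ on its range and linearly interpolates across the jumps (concretely, if $X_i$ has CDF $F_i$, let $\tilde X_i = F_i(X_i^-) + U_i(F_i(X_i)-F_i(X_i^-))$ for an independent uniform $U_i$, so that $\tilde F_i$ is continuous and $\tilde X_i$ determines $X_i$). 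The continuous case then produces a copula $\tilde C$ for the joint distribution of $(\tilde X_1,\dots,\tilde X_n)$, and I take $C:=\tilde C$; the identity \eqref{thm: forSklar} for $H$ follows because $\tilde F_i(\tilde X_i) = F_i(X_i)$ in distribution on the range and $H(x_1,\dots,x_n)=\tilde H(\tilde x_1,\dots,\tilde x_n)$ at the relevant points. Uniqueness on $\text{Range}(F_1)\times\cdots\times\text{Range}(F_n)$ is immediate from \eqref{thm: forSklar}: two copulas agreeing there give the same $H$, and conversely any value of $C$ off this set is unconstrained by $H$ alone. When all $F_i$ are continuous, each range is all of $[0,1]$, so $C$ is unique outright.

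Finally, for the converse, I would fix a copula $C$ and CDFs $F_1,\dots,F_n$ and define $H$ by the right-hand side of \eqref{thm: forSklar}. Checking that $H$ is a joint CDF with the prescribed marginals reduces to pushing the four copula axioms through: $n$-increasingness of $H$ on a rectangle in $\mathbb{R}^n$ transfers to $n$-increasingness of $C$ on the image rectangle in $[0,1]^n$ because each $F_i$ is nondecreasing; groundedness of $H$ as any $x_k\to-\infty$ follows from groundedness of $C$ at $u_k=0$ via $F_k(x_k)\to 0$; the limit $H\to 1$ as all $x_i\to\infty$ uses $C(1,\dots,1)=1$; and the marginal $H(\infty,\dots,\infty,x_k,\infty,\dots,\infty)=F_k(x_k)$ is exactly property (4) of the copula definition. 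The only subtle point in the whole argument is the smoothing step in paragraph two — verifying that the distributional enlargement $X_i\mapsto \tilde X_i$ faithfully transports the dependence structure — but this is a standard measure-theoretic construction and, once justified, the rest of the proof is bookkeeping.
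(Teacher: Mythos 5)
The paper does not prove this statement at all: it is imported verbatim as Theorem 2.10.9 of Nelsen \cite{Nelsen}, and Section \ref{sec: sectiontwo} explicitly states that all results there are standard and defers to the references for proofs. So there is no in-paper argument to compare against; I can only assess your proposal on its own terms and against the proof it implicitly competes with, namely Nelsen's. Your outline is a correct and essentially complete sketch of a standard proof, but it follows the \emph{distributional transform} route (define $C$ via quantile functions in the continuous case; handle atoms by randomizing across jumps with auxiliary independent uniforms $U_i$ so that the transformed variables are uniform and a genuine copula is produced directly), whereas Nelsen's proof first constructs a \emph{subcopula} on ${\rm Range}(F_1)\times\cdots\times{\rm Range}(F_n)$ and then extends it to all of $[0,1]^n$ by multilinear interpolation. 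Your route buys a cleaner existence statement with no separate extension lemma; Nelsen's makes the ``uniquely determined on the product of ranges'' clause completely transparent, since the subcopula is exactly the uniquely determined object. Two spots in your write-up deserve tightening. First, the step $H(x_1,\dots,x_n)=H(F_1^{-1}(F_1(x_1)),\dots,F_n^{-1}(F_n(x_n)))$ is best justified not by an appeal to ``almost-sure equality of coordinates'' but by the Lipschitz bound $|H(\mathbf{x})-H(\mathbf{y})|\le\sum_i|F_i(x_i)-F_i(y_i)|$, which follows from groundedness and $n$-increasingness and gives the claim immediately since $F_i(F_i^{-1}(F_i(x_i)))=F_i(x_i)$ when $F_i$ is continuous. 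Second, in the smoothing step you must require the auxiliary uniforms $U_1,\dots,U_n$ to be independent of each other and of $(X_1,\dots,X_n)$, and you should state precisely that the copula you take is the joint CDF of the transformed (already uniform) variables; as written, the description of $\tilde F_i$ as ``a distorted continuous CDF agreeing with $F_i$ on its range'' conflates the transform of the variable with a transform of its CDF. Neither issue is a gap in the idea, only in the bookkeeping.
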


\begin{thm}[Extension of Theorem 2.4.2 in \cite{Nelsen}]
Let $X_{1},X_{2},\dots,X_{n}$ be continuous random variables. Then they are independent if and only if their copula, $C_{X_{1},X_{2},\dots,X_{n}}$, is given by $C_{X_{1},X_{2},\dots,X_{n}}(x_{1},x_{2},\dots,x_{n})=\Pi(x_{1},x_{2},\dots,x_{n}) =x_{1}x_{2}\cdots x_{n})$, where $\Pi$ is called the \textit{product copula}.
\end{thm}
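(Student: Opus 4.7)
The plan is to prove both directions by invoking Sklar's Theorem and the standard characterization of independence for continuous random variables in terms of the factorization of the joint CDF into its marginals.

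For the forward direction, I would start by assuming $X_1, X_2, \dots, X_n$ are independent continuous random variables with marginal CDFs $F_1, F_2, \dots, F_n$. Independence is equivalent (for continuous random variables) to the joint CDF factoring as
\begin{equation}
H(x_1, x_2, \dots, x_n) \ = \ F_1(x_1) F_2(x_2) \cdots F_n(x_n).
\end{equation}
By Sklar's Theorem (Theorem \ref{thm: sklar}), because all marginals are continuous, there is a \emph{unique} copula $C_{X_1, \dots, X_n}$ such that $H(x_1, \dots, x_n) = C_{X_1, \dots, X_n}(F_1(x_1), \dots, F_n(x_n))$. Setting $u_i = F_i(x_i)$ and noting that continuity implies each $F_i$ is surjective onto $[0,1]$, the two expressions for $H$ force $C_{X_1, \dots, X_n}(u_1, \dots, u_n) = u_1 u_2 \cdots u_n = \Pi(u_1, \dots, u_n)$ on all of $[0,1]^n$.

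For the reverse direction, I would assume the copula associated with $(X_1, \dots, X_n)$ is $\Pi$. Applying Sklar's Theorem once more, the joint CDF satisfies
\begin{equation}
H(x_1, \dots, x_n) \ = \ \Pi(F_1(x_1), \dots, F_n(x_n)) \ = \ F_1(x_1) F_2(x_2) \cdots F_n(x_n),
\end{equation}
which is exactly the factorization condition characterizing independence of the $X_i$.

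Honestly, the statement is essentially a corollary of Sklar's Theorem, so there is no single hard step. The only subtle point is to be careful that the copula identified in the forward direction is defined on all of $[0,1]^n$ rather than just on the range of the marginals; this is where continuity of the $F_i$ is genuinely used, since without it Sklar's Theorem only pins down $C$ on $\mathrm{Range}(F_1) \times \cdots \times \mathrm{Range}(F_n)$. Since the statement of the theorem assumes continuity, this subtlety is handled cleanly and no extension argument is needed.
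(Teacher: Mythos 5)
Your proof is correct and follows the standard route---the paper itself gives no proof of this statement, deferring to Nelsen, where Theorem 2.4.2 is established in exactly this way via Sklar's theorem and its uniqueness clause. One small imprecision: a continuous CDF need not be surjective onto $[0,1]$ (e.g., the normal CDF never attains $0$ or $1$), but its range contains $(0,1)$, which is dense in $[0,1]$, and the uniqueness part of Sklar's theorem (equivalently, the Lipschitz continuity of copulas) closes this gap exactly as you indicate.
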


\begin{thm}[Extension of Theorem 2.4.3 in \cite{Nelsen}]\label{thm:noChange}
Let $X_{1},X_{2},\dots,X_{n}$ be continuous random variables with copula $C_{X_{1},X_{2}\dots X_{n}}$ . If $a_{1},a_{2},\dots,a_{n}$  are strictly increasing on $\rm{Range}(X_{1})$, $\rm{Range}(X_{2})$, $\dots$, $\rm{Range}(X_{n})$, respectively, then $C_{a_{1}(X_{1}),a_{2}(X_{2})\dots a_{n}(X_{n})}=C_{X_{1},X_{2}\dots X_{n}}$. Thus $C_{X_{1},X_{2}\dots X_{n}}$ is invariant under strictly increasing transformations of  $X_{1},X_{2},\dots,X_{n}$.
\end{thm}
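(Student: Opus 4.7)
The plan is to deduce the theorem directly from Sklar's theorem (Theorem~\ref{thm: sklar}) by tracking how the joint CDF and the marginal CDFs transform under strictly increasing maps, and then invoking the uniqueness clause of Sklar's theorem. The key observation is that a strictly increasing function is injective, so it possesses a well-defined inverse on its range; combined with the fact that each $F_i$ is continuous, this will let us rewrite the transformed joint CDF in the exact form required to identify its copula.

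Concretely, first I would set $Y_i := a_i(X_i)$ and compute the marginal CDFs of the $Y_i$. Since $a_i$ is strictly increasing on $\mathrm{Range}(X_i)$, we have
\begin{equation}
G_i(y) \ := \ \Prob{Y_i \le y} \ = \ \Prob{X_i \le a_i^{-1}(y)} \ = \ F_i(a_i^{-1}(y))
\end{equation}
for $y$ in the appropriate range, and $G_i$ inherits continuity from $F_i$. Next I would compute the joint CDF of $(Y_1,\ldots,Y_n)$:
\begin{equation}
H_Y(y_1,\ldots,y_n) \ = \ \Prob{X_1 \le a_1^{-1}(y_1),\ldots,X_n \le a_n^{-1}(y_n)} \ = \ H_X(a_1^{-1}(y_1),\ldots,a_n^{-1}(y_n)).
\end{equation}

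Applying Sklar's theorem to $(X_1,\ldots,X_n)$ gives $H_X(x_1,\ldots,x_n) = C_{X_1,\ldots,X_n}(F_1(x_1),\ldots,F_n(x_n))$, so substituting $x_i = a_i^{-1}(y_i)$ yields
\begin{equation}
H_Y(y_1,\ldots,y_n) \ = \ C_{X_1,\ldots,X_n}\bigl(F_1(a_1^{-1}(y_1)),\ldots,F_n(a_n^{-1}(y_n))\bigr) \ = \ C_{X_1,\ldots,X_n}(G_1(y_1),\ldots,G_n(y_n)).
\end{equation}
But Sklar's theorem also guarantees a copula $C_{Y_1,\ldots,Y_n}$ with $H_Y(y_1,\ldots,y_n) = C_{Y_1,\ldots,Y_n}(G_1(y_1),\ldots,G_n(y_n))$, and because the $Y_i$ are continuous, this copula is \emph{unique}. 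Comparing the two representations of $H_Y$ forces $C_{Y_1,\ldots,Y_n} = C_{X_1,\ldots,X_n}$ on $\mathrm{Range}(G_1)\times\cdots\times\mathrm{Range}(G_n)$, which is dense enough (in fact, by continuity of the $G_i$, contains $[0,1]^n$ up to boundary issues handled by the grounded/marginal properties of copulas) to conclude equality as copulas.

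The only subtlety — and the main obstacle worth flagging — is making sense of $a_i^{-1}$ outside $\mathrm{Range}(a_i)$ and ensuring the identification holds on all of $[0,1]^n$, not merely on the image of the marginals. This is resolved by noting that a strictly increasing function on $\mathrm{Range}(X_i)$ has a well-defined inverse on its image, that the continuity hypothesis on the $X_i$ makes each $F_i$ (and hence $G_i$) surjective onto $[0,1]$ up to endpoints, and that any two copulas agreeing on a dense subset of $[0,1]^n$ must agree everywhere by the continuity of copulas (a standard consequence of the Lipschitz estimate $|C(\mathbf u)-C(\mathbf v)|\le \sum_i |u_i-v_i|$). Thus the identification extends to all of $[0,1]^n$, completing the proof.
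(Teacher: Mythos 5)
The paper does not prove this statement itself: it is listed among the standard background results in Section \ref{sec: sectiontwo} and deferred to \cite{Nelsen} (Theorem 2.4.3). Your argument is correct and is essentially the proof given there --- compute $G_i = F_i\circ a_i^{-1}$, rewrite $H_Y$ via Sklar's theorem applied to $H_X$, and invoke the uniqueness clause for continuous marginals --- so there is nothing to add beyond noting that the closing worry about density is unnecessary, since continuity of the $G_i$ already puts you in the case where Sklar's uniqueness holds on all of $[0,1]^n$.
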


\begin{rek}
For the following three definitions, see \cite{Nelsen} page 116 for the 2-copula formulas and page 151 for the n-copula extension.
\end{rek}

\begin{defi}[Clayton Family of Copulas]\label{def:clayton}
A (n-dimensional) copula in the \textbf{Clayton family} is given by the equation
\begin{equation}
C(u_{1},u_{2},\dots,u_{n}) \ = \ \max{\{u_{1}^{-\alpha}+u_{2}^{-\alpha}+\dots+u_{n}^{-\alpha}+n-1,0\}},
\end{equation}
where $\alpha \in [-1,\infty) \setminus \{0\}$ is a parameter related to dependence with $\alpha=0$ as the independence case.
\end{defi}

\begin{defi}[Ali-Mikhail-Haq Family of Copulas]\label{def:amh}
A (n-dimensional) copula in the \textbf{Ali-Mikhail-Haq family} is given by the equation
\begin{equation}C(u_{1},u_{2},\dots,u_{n}) \ = \ \frac{(1-\alpha)}{(\prod_{i=1}^{n}\frac{1-\alpha(1-u_{i})}{u_{i}})-\alpha},
\end{equation}
where $\alpha \in [-1,1)$ is a parameter related to dependence with $\alpha=0$ as the independence case.
\end{defi}

\begin{defi}[Gumbel-Barnett Family of Copulas]\label{def:gb}
A (n-dimensional) copula in the \textbf{Gumbel-Barnett family} is given by the equation
\begin{equation}C(u_{1},u_{2},\dots,u_{n}) \ = \ \exp{\frac{1 + (\alpha \log{u_{1}}-1)(\alpha \log{u_{2}}-1)\cdots (\alpha \log{u_{n}}-1)}{\alpha}},
\end{equation}
where $\alpha \in (0,1]$ is a parameter related to dependence with $\alpha =0$ as the independence case.
\end{defi}

\section{Testing for Benford Behavior of a Product}\label{sec: sectionthree}

We state the results below in arbitrary dimensions but for notational convenience give the proofs for just two dimensions (as the generalization is straightforward).


Let $X_{1},X_{2},\dots,X_{n}$ be continuous random variables with CDFs $F_{X_1}(x_1),\dots,F_{X_n}(x_n)$. Let their joint PDF be $H_{X_{1},X_{2},\dots,X_{n}}(X_{1},X_{2},\dots,X_{n})$. By Theorem (\ref{thm: sklar}), we know there exists a copula $C$ such that
\begin{equation}
H_{X_{1},X_{2},\dots,X_{n}}(X_{1},X_{2},\dots,X_{n}) \ = \ C(F_{X_1}(X_{1}),\dots,F_{X_{n}}(X_n)).
\end{equation}
Assume $X_{1},\dots,X_{n}$ are such that their copula $C$ is \textit{absolutely continuous}. This allows us to define the joint probability density function (see \cite{Nelsen}, page 27) by $\frac{\partial^{n}C}{\partial{x_1}\partial{x_2} \cdots \partial{x_n}}$ . Furthermore, we restrict ourselves to $X_i$ such that all $F_{X_i}$ are uniformly continuous, as this allows us to use Lemma (\ref{thm: barbalat}) to later ensure that the  PDFs approach zero in their right and left end limits.

From here we have the following lemma.

\begin{lem}\label{thm: benfsolve}
Given $X_{1},X_{2},\dots,X_{n}$ continuous random variables with joint distribution modeled by the absolutely continuous copula $C$, let $U_{i}=\log_{B}{X_{i}}$ for all $i \leq n$ and for some base $B$, and let the CDFs of each $U_{i}$ be $F_{i}(u_{i})$. Also, let $f_{i}(u_{i})$ be the PDF of $U_{i}$ for all $i$. Finally, let $\textbf{u}_{0}=(u_{1},u_{2},\dots,u_{n-1},s+k-(u_{1}+u_{2}+\cdots+u_{n-1}))$. Then
\begin{align} \label{eq: pdfmod11}
\nonumber& \Prob{(\sum_{i=1}^{n}U_{i})\hspace{0.5mm}\rm{mod}\hspace{0.5mm}1 \leq s} \ = \\
&\int_{0}^{s}  \sum_{k=-\infty}^{\infty}\int_{u_{1}=-\infty}^{\infty} \cdots \int_{u_{n-1}=-\infty}^{\infty}\frac{\partial^{n}C(F_{1}(u_{1}),\dots,F_{n-1}(u_{n-1}),F_{n}(u_{n}))}{\partial{u_1}\partial{u_2}\cdots \partial{u_n}}\Big|_{\textbf{u}_{0}}{du_{1}\cdots du_{n-1}}.
\end{align}
Therefore, the PDF of $(U+V)\hspace{0.5mm}\rm{mod}\hspace{0.5mm}1$ is given by
\begin{equation}\label{eq:pdfmod1}
\sum_{k=-\infty}^{\infty}\int_{u_{1}=-\infty}^{\infty} \cdots \int_{u_{n-1}=-\infty}^{\infty}\frac{\partial^{n}C(F_{1}(u_{1}),\dots,F_{n-1}(u_{n-1}),F_{n}(u_{n}))}{\partial{u_1}\partial{u_2}\cdots \partial{u_n}}\Big|_{\textbf{u}_{0}}{du_{1}\cdots du_{n-1}}.
\end{equation}
\end{lem}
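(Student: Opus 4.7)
The plan is to identify the integrand of \eqref{eq:pdfmod1} as the joint PDF of $(U_{1},\ldots,U_{n})$, marginalize out the constraint $U_{1}+\cdots+U_{n} = t$ to obtain the PDF of the sum, and then fold by periodicity to pass to the distribution of the sum modulo~$1$.

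First, since $x \mapsto \log_{B}x$ is strictly increasing on the positive reals, Theorem~\ref{thm:noChange} guarantees that the copula of $(U_{1},\ldots,U_{n})$ is the same absolutely continuous copula $C$ that models $(X_{1},\ldots,X_{n})$. Sklar's Theorem (Theorem~\ref{thm: sklar}) then says that the joint CDF of $(U_{1},\ldots,U_{n})$ is $C(F_{1}(u_{1}),\ldots,F_{n}(u_{n}))$, and the absolute continuity of $C$ permits differentiating $n$ times to identify the joint PDF as
\begin{equation*}
h(u_{1},\ldots,u_{n}) \ = \ \frac{\partial^{n} C(F_{1}(u_{1}),\ldots,F_{n}(u_{n}))}{\partial u_{1}\cdots\partial u_{n}}.
\end{equation*}
Writing $T = U_{1}+\cdots+U_{n}$, the linear change of variables $(u_{1},\ldots,u_{n-1},u_{n}) \mapsto (u_{1},\ldots,u_{n-1},t)$ with $t = u_{1}+\cdots+u_{n}$ has unit Jacobian, so marginalizing over the hyperplane gives
\begin{equation*}
f_{T}(t) \ = \ \int_{-\infty}^{\infty}\!\!\cdots\!\!\int_{-\infty}^{\infty} h\bigl(u_{1},\ldots,u_{n-1}, t-u_{1}-\cdots-u_{n-1}\bigr)\, du_{1}\cdots du_{n-1}.
\end{equation*}

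Next, for $s \in [0,1]$, partition $\{T \hspace{0.5mm}\mathrm{mod}\hspace{0.5mm}1 \leq s\}$ as the disjoint union $\bigsqcup_{k\in\Z}\{k \leq T \leq k+s\}$ and apply Tonelli's Theorem (the integrand is nonnegative) to interchange the sum with the integral:
\begin{equation*}
\Prob{T \hspace{0.5mm}\mathrm{mod}\hspace{0.5mm}1 \leq s} \ = \ \sum_{k \in \Z}\int_{k}^{k+s} f_{T}(t)\, dt \ = \ \int_{0}^{s}\sum_{k \in \Z} f_{T}(s'+k)\, ds'.
\end{equation*}
Substituting the integral expression for $f_{T}$, setting $t = s'+k$, and renaming $s' \to s$ reproduces \eqref{eq: pdfmod11}. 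Differentiating in $s$ and invoking the fundamental theorem of calculus yields \eqref{eq:pdfmod1}.

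The only delicate point is justifying the interchanges of limits, sums, and derivatives. Since $f_{T}$ is a probability density, $\sum_{k\in\Z}\int_{k}^{k+s} f_{T}(t)\, dt \leq 1$, so Tonelli's Theorem guarantees that $\sum_{k\in\Z} f_{T}(s'+k)$ is finite for almost every $s' \in [0,1)$ and can be integrated term-by-term; this is where the uniform continuity hypothesis on the $F_{X_{i}}$ enters, because via Barbalat's Lemma (Lemma~\ref{thm: barbalat}) it forces the marginal PDFs to vanish at $\pm\infty$, which is the tail regularity that prevents pathologies when differentiating through the sum. I expect this bookkeeping — rather than any single computational step — to be the main obstacle, since the algebraic core of the argument reduces to a chain-rule identity combined with Fubini-Tonelli.
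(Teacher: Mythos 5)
Your proposal is correct and follows essentially the same route as the paper's proof: both use Theorem \ref{thm:noChange} together with Sklar's Theorem to identify the joint PDF of the $U_i$ as the mixed partial of $C(F_1(u_1),\dots,F_n(u_n))$, both decompose the event $\{\sum_i U_i \bmod 1 \le s\}$ into the integer-translated strips indexed by $k$, and both differentiate in $s$ to pass from the CDF to the PDF. Your explicit introduction of the intermediate density $f_T$ of the sum and the appeal to Tonelli is only a cosmetic reorganization of the paper's direct integration over the regions $k-u \le v \le s+k-u$, and your remark on the role of uniform continuity and Barbalat's Lemma matches the paper's own discussion preceding the lemma.
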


See Appendix \ref{sec:Appendix 1} for the proof.\\

If (\ref{eq:pdfmod1}) equals $1$ for all $s$, then our product is Benford. If it is not identically equal to $1$ for all $s$, then at each point we may assign a value $\epsilon_{s}$ that represents our distance from a Benford distribution. Thus we have
\begin{equation}\label{eq:eps}
\epsilon_{s} \ = \ \left| 1 \ - \ \sum_{k=-\infty}^{\infty}\int_{u_{1}=-\infty}^{\infty} \cdots \int_{u_{n-1}=-\infty}^{\infty}\frac{\partial^{n}C(F_{1}(u_{1}),\dots,F_{n-1}(u_{n-1}),F_{n}(u_{n}))}{\partial{u_1}\partial{u_2}\cdots \partial{u_n}}\Big|_{\textbf{u}_{0}}{du_{1}\cdots du_{n-1}}  \right|.
\end{equation}
This formulation will form the basis of Section \ref{sec: sectionfour}.

Unfortunately, the infinite sum and improper integral in \eqref{eq:pdfmod1} makes it highly impractical to use in application unless we can determine a method to closely approximate it by a finite sum and finite integral. We note that \eqref{eq:pdfmod1} is a PDF, and so is $\frac{\partial^{n}C}{\partial{x_1}\partial{x_2}\cdots \partial{x_n}}$, so we have the following properties (for notational convenience we state them in the two-dimensional case; similar results hold for $n$-dimensions).
\begin{enumerate}
\item $\int_{0}^{1}\left(\sum_{k=-\infty}^{\infty}\int_{-\infty}^{\infty}C_{u_{1}u_{2}}(F_{1}(u_1),F_{2}(s+k-u_{1}))f_{1}(u_{1})f_{2}(s+k-u_{1})du_{1} \right)ds = 1$.
\\
\item $\sum_{k=-\infty}^{\infty}\int_{-\infty}^{\infty}C_{u_{1}u_{2}}(F_{1}(u_1),F_{2}(s+k-u_{1}))f_{1}(u_{1})f_{2}(s+k-u_{1})du_{1} \geq 0$ for all $s$.
\\
\item $\int_{-\infty}^{\infty}C_{u_{1}u_{2}}(F_{1}(u_1),F_{2}(s+k-u_{1}))f_{1}(u_{1})f_{2}(s+k-u_{1})du_{1} \to 0$ as $k \to \pm \infty$.
\\
\item $C_{u_{1}u_{2}}(F_{1}(u_1),F_{2}(s+k-u_{1}))f_{1}(u_{1})f_{2}(s+k-u_{1}) \to 0$ as $u_{1} \to \pm \infty$.
\end{enumerate}

Property ($1$) is simply the definition of a PDF, and Property ($2$) is a direct result of the fact that a PDF is always positive. Properties $3$ and $4$ are required, under Lemma \ref{thm: barbalat}, by the convergence of the integral in Property ($1$) and by the convergence of the sum.

From Properties ($3$) and ($4$) and the definition of convergence we obtain the following.

\begin{lem} [Approximating the PDF] \label{lem:convergence}
Given $U_{1},\dots,U_{n}$ continuous random variables modeled by the copula $C$ with marginal CDFs $F_{1},\dots,F_{n}$ and PDFs $f_{1},\dots,f_{n}$, then there exist $a_{1},\dots,a_{n-1}$, $b_{1},\dots,b_{n-1}$, and $c_{1}$ and $c_{2}$ completely dependent on the $F_{i}$ such that $a_{i}< b_{i}$ for all $i$ and $c_{1}<c_{2}$ and
\begin{align}\label{eq:boundedInt}
\nonumber &\sum_{k=-\infty}^{\infty}\int_{u_{1}=-\infty}^{\infty}\cdots  \int_{u_{n-1}=-\infty}^{\infty}\frac{\partial^{n}C(F_{1}(u_{1}),\dots,F_{n-1}(u_{n-1}),F_{n}(u_{n}))}{\partial{u_1}\partial{u_2}\cdots \partial{u_n}}\Big|_{\textbf{u}_{0}}{du_{1}\cdots du_{n-1}} \\
&\ = \ \sum_{k=c_{1}}^{c_{2}}\int_{u_{1}=a_{1}}^{b_{1}}\cdots  \int_{u_{n-1}=a_{n-1}}^{b_{n-1}}\frac{\partial^{n}C(F_{1}(u_{1}),\dots,F_{n-1}(u_{n-1}),F_{n}(u_{n}))}{\partial{u_1}\partial{u_2}\cdots \partial{u_n}}\Big|_{\textbf{u}_{0}}{du_{1}\cdots du_{n-1}} + \ E_{a,b,c}(s)
\end{align}
where $E_{a,b,c}(s) \to 0$ as each $a_{i}$ and $c_{1}$ go to $ -\infty$ and each $b_{i}$ and $c_{2}$ go to $\infty$. Thus, for any $\epsilon > 0$, there exists (for each $i$) $|a_{i}|$, $|b_{i}|$, $|c_{1}|$, and $|c_{2}|$ large enough such that $|E_{a,b,c}(s)| \leq \epsilon$.
\end{lem}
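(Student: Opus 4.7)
The plan is to perform two successive truncations — first of the infinite sum over $k$, then of the improper integrals over $u_1, \ldots, u_{n-1}$ — each contributing at most $\epsilon/2$ to the total error. This is feasible because the integrand is non-negative (it is a copula density composed with the marginal CDFs times a chain-rule factor arising from the substitution defining $\mathbf{u}_0$), and because the full sum-integral in \eqref{eq:pdfmod1} represents the finite PDF value of $(\sum U_i) \bmod 1$ at $s$.

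First, fix $s \in [0,1]$ and abbreviate the inner integral for index $k$ as $S_k(s)$. By Property $3$, $S_k(s) \to 0$ as $|k| \to \infty$, so the non-negative series $\sum_{k \in \mathbb{Z}} S_k(s)$ converges. Given $\epsilon > 0$, I would choose integers $c_1 < c_2$ so that the tail $\sum_{k \notin [c_1, c_2]} S_k(s)$ is less than $\epsilon/2$; this is the standard tail estimate for any convergent series of non-negative terms.

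Second, for each $k \in \{c_1, c_1+1, \ldots, c_2\}$ the non-negative integrand has finite $(n-1)$-fold integral equal to $S_k(s)$. By Property $4$, together with Barbalat's Lemma (Lemma \ref{thm: barbalat}) applied to the uniformly continuous CDFs $F_i$ (forcing $f_i(u_i) \to 0$ as $u_i \to \pm \infty$), the integrand decays to $0$ at infinity in each coordinate direction. Standard tail estimates for absolutely convergent multiple integrals then yield, for each such $k$, a box $\prod_{i=1}^{n-1}[a_i(k), b_i(k)]$ outside which the integral contribution is less than $\epsilon/(2(c_2 - c_1 + 1))$. Since $\{c_1, \ldots, c_2\}$ is finite, I would set $a_i := \min_k a_i(k)$ and $b_i := \max_k b_i(k)$; monotonicity of the integral of a non-negative function over nested regions preserves the per-$k$ bound when the box is enlarged. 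Summing over the $c_2 - c_1 + 1$ retained indices contributes at most $\epsilon/2$.

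Combining the two truncation errors gives $|E_{a,b,c}(s)| \leq \epsilon$, and letting each $|a_i|, |b_i|, |c_1|, |c_2| \to \infty$ drives $E_{a,b,c}(s) \to 0$. The main obstacle is the interaction between the two truncations: we need uniform control on the integral tails over the retained values of $k$ so that a \emph{single} box $\prod [a_i, b_i]$ works for every kept term. Because the retained index set is finite, the max/min over $k$ is attained and the argument goes through without appealing to any uniform-in-$k$ decay rate; the subtler regularity point is guaranteeing the vanishing of the integrand at infinity in the continuous variables $u_i$, which is precisely what Barbalat's Lemma supplies under the hypothesis that each $F_i$ is uniformly continuous.
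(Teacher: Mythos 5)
Your proposal is correct, and it actually takes a cleaner route than the paper's own argument. The paper's appendix proof fixes $s$ and $k$ and shows only that the \emph{integrand} $C_{uv}(F(u),G(s+k-u))f(u)g(s+k-u)$ can be made pointwise smaller than $\epsilon$ outside $[a_1,a_2]$, by choosing cutoffs where $f$ and $g$ each drop below $\sqrt{\epsilon/C_{uv}}$; it never converts that pointwise bound into a bound on the tail of the integral (a function bounded by $\epsilon$ on an unbounded region need not have small integral there), and it does not address how the $k$-truncation and the $u$-truncation interact. You instead bound the tails directly: the tail of the convergent non-negative series $\sum_k S_k(s)$ is below $\epsilon/2$, and for each of the finitely many retained $k$ the tail of the absolutely convergent integral outside a sufficiently large box is below $\epsilon/(2(c_2-c_1+1))$, with a single box obtained by taking the min/max of the finitely many per-$k$ boxes. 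That two-stage $\epsilon/2$ split plus the finiteness of the retained index set is exactly the piece the paper leaves implicit, so your version buys genuine rigor at essentially no extra cost. Two minor remarks: the invocation of Barbalat's Lemma and Property 4 is not actually needed for your step 2 (integrability of a non-negative function alone forces its tail integrals over the complements of an exhausting sequence of boxes to vanish, by monotone convergence), and, like the paper, you take the convergence of $\sum_k S_k(s)$ at each fixed $s$ as given from the identification of the expression with a finite PDF value rather than proving it; that is the same footing the paper stands on, so it is not a gap relative to the source.
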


The proof of this claim can be found in Appendix {\ref{sec:Appendix 1}}.


Because $s$ only ranges from $0$ to $1$, we can always find a value of $s$ that maximizes $E_{a,b,c}$ for any given set of $a$, $b$, and $c$ and set this to be the maximum error.  Furthermore, since all $f_{i}$ should have similar tail-end behavior, we do not have to worry about the divergence of one canceling out the divergence of the other. Thus, for this analysis to work, it is sufficient to understand the tail-end behavior of only one of the marginals.

In Appendix \ref{sec:Appendix 2}, we provide several examples of this method for testing for Benford behavior computationally with two variables.

\section{Testing For Benford Behavior: Examples}\label{sec: sectionthreepointfive}

 Now that an effective method for testing the Benford behavior of copulas has been established, we investigate how this behavior varies for specific copulas and marginals. In all $\chi^{2}$ tests, we have $11$ degrees of freedom and a significance level of $0.005$, so we reject the hypothesis if the value exceeds $2.6$. Our main interest, however, is to observe the how and if these values trend towards this critical value.
\subsection{ 2-Copulas with varying dependence parameter}
The following figures display the non-error values of \eqref{eq:boundedInt} at various values of $s$ for three different copulas. The red line in each plot indicates the constant function $y=1$ which will be achieved if the product $XY$ is exactly Benford. For each copula, we test three different pairings of marginals: $(A)$ $X~10^{\rm{N}(0,1)}$ and $Y~10^{\rm{Exp}(1)}$, $(B)$ $X~10^{\rm{Pareto}(1)}$ and $Y~10^{\rm{N}(0,1)}$, and $(C)$  $X~10^{\rm{Pareto}(1)}$ and $Y~10^{\rm{Exp}(1)}$. In each case, we vary the dependence parameter, $\alpha$ and compare the results to the case of independence. Our Pareto distribution has scale parameter $x_{m}=1$ and shape parameter $\alpha_{p}=2$. We note that in some cases the axes must be adjusted to be able to show any change in the Benford behavior.
\par
\subsubsection{Ali-Mikhail-Haq Copula:}
 Considering the independence case, $\alpha=0$, in Figure \ref{fig:amh1} we note that marginal pairings $(A)$ and $(B)$ have an approximately Benford product when independent. Pairing $(C)$, however, does not. From these plots, it is evident that the Ali-Mikhail-Haq copula displays notably consistent Benford behavior, as each plot remains very close to the independence case as $\alpha$ moves over its full range. This is reinforced by the corresponding plots in Figure \ref{fig:chi1}, which display the $\chi^{2}$ values of each marginal pairing for each value of alpha. We point out that although each plot indicates a general trend away from Benford behavior (the constant function $1$), the values for pairing $(A)$ are all smaller than $10^{-7}$, making them effectively $0$. Similarly, the values for pairing $(B)$ appear to increase linearly, but they are all of order of $10^{-6}$. The values for pairing $B$ vary from order $10^{-2}$ to order $10^{-1}$, suggesting that the behavior is both significantly less Benford and more variable than the other two pairings.
\FloatBarrier
\begin{figure}
\begin{minipage}[t]{0.31\linewidth}
\includegraphics[width=\linewidth,height=1.4\linewidth]{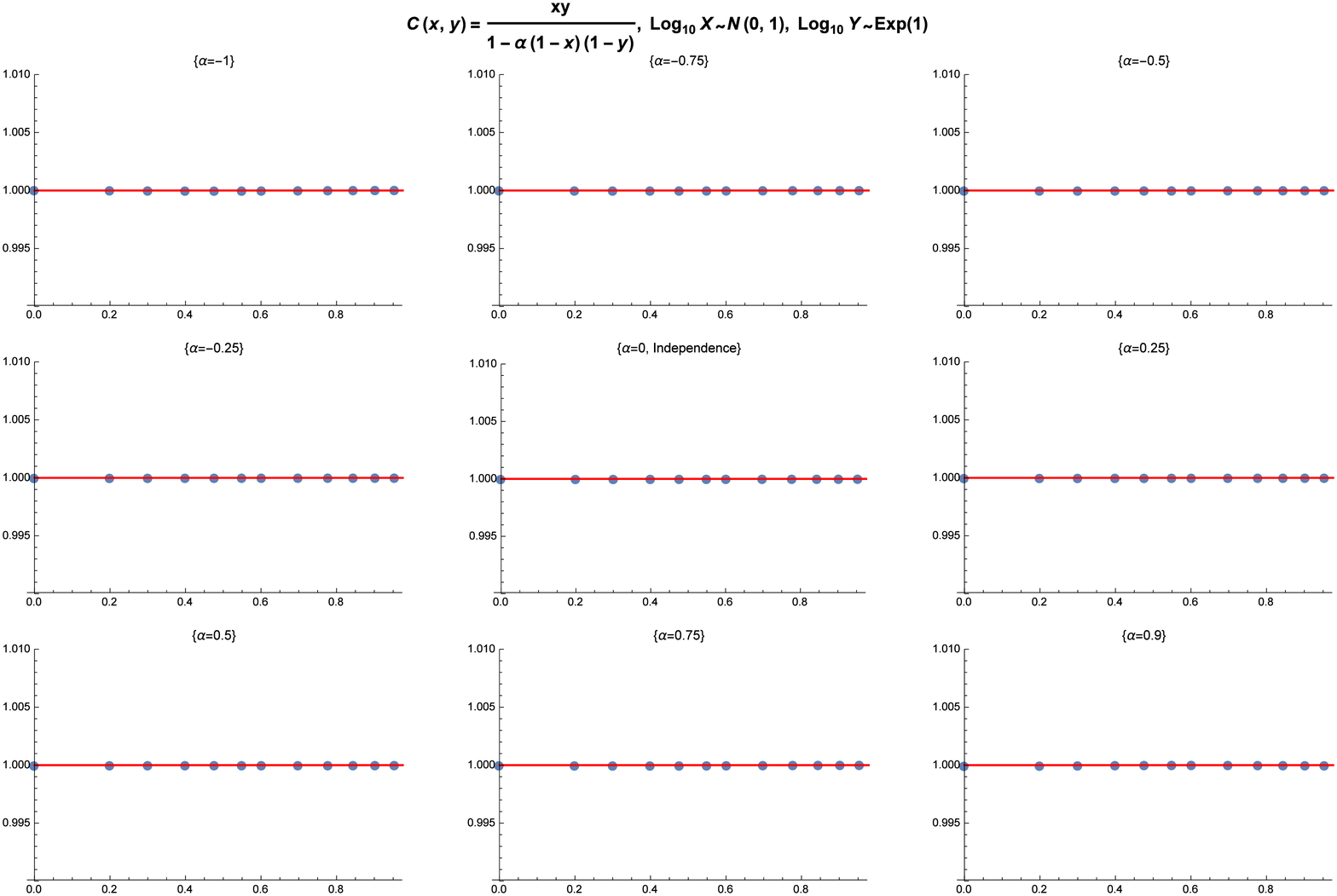}
\end{minipage}
\begin{minipage}[t]{0.31\linewidth}
\includegraphics[width=\linewidth,height=1.4\linewidth]{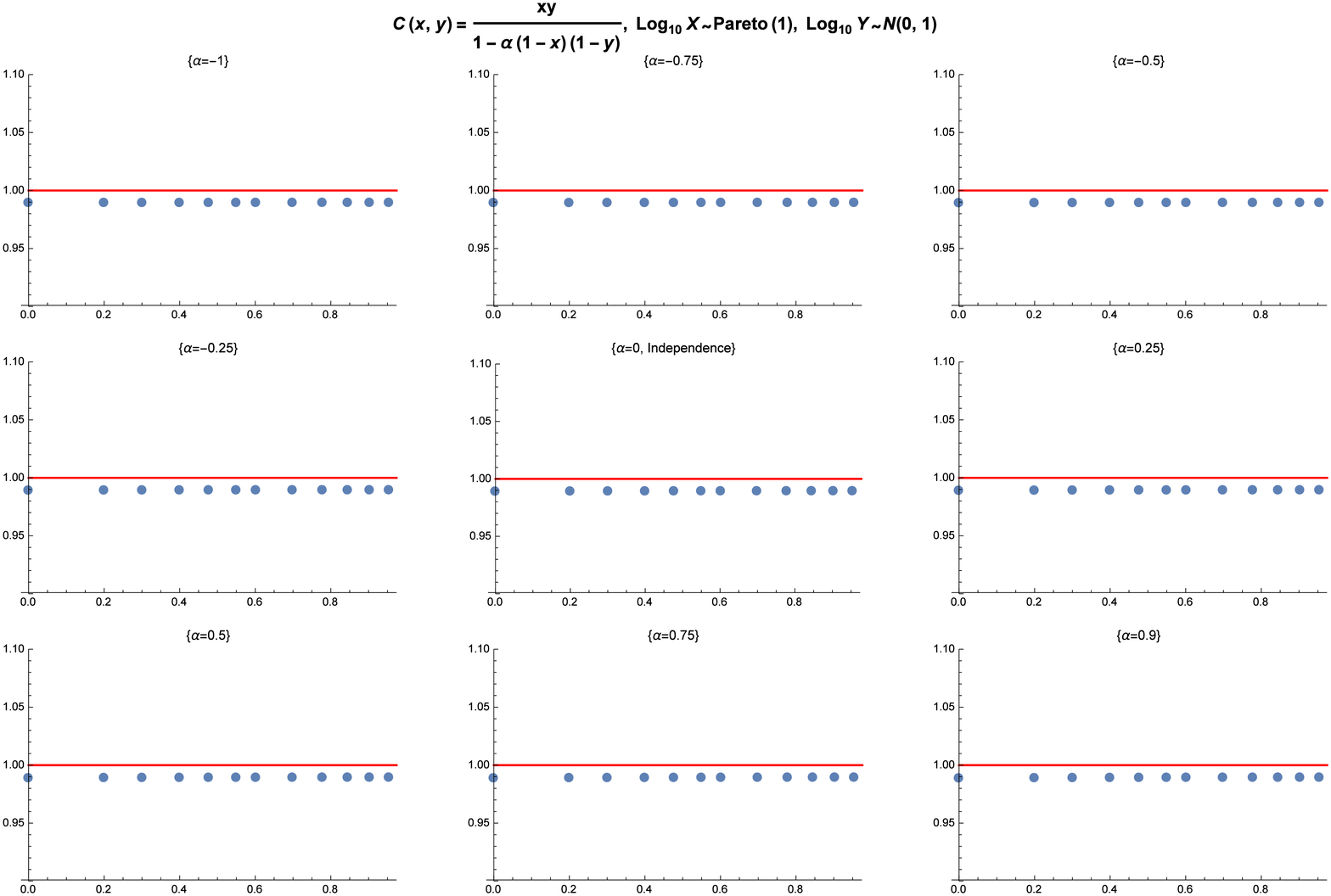}
\end{minipage}
\begin{minipage}[t]{0.31\linewidth}
\includegraphics[width=\linewidth,height=1.4\linewidth]{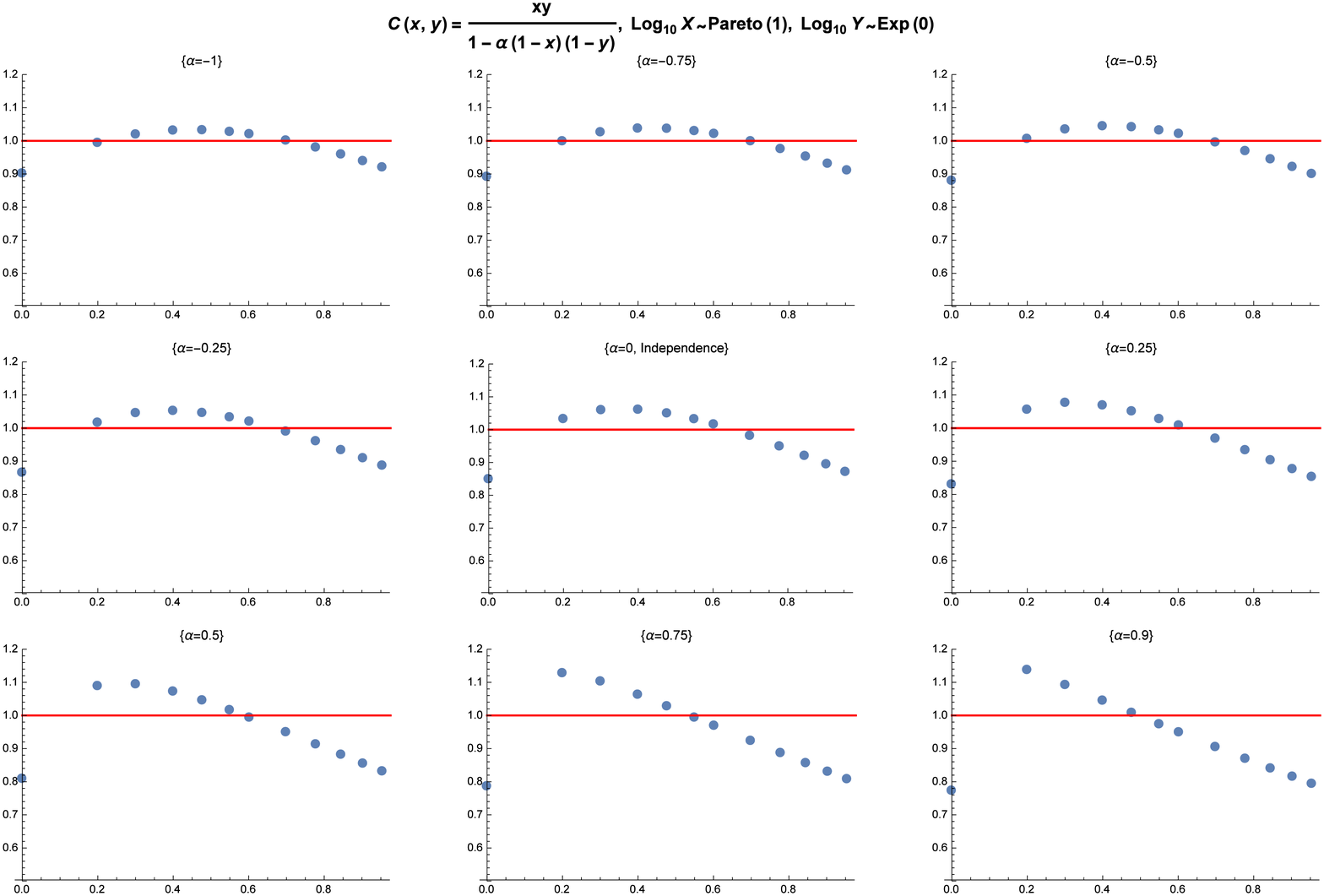}
\end{minipage}
\caption{The Ali-Mikhail-Haq 2-copula (see Definition \ref{def:amh}) modeled on three different sets of marginals with varying dependence parameter $\alpha \in [-1,1)$. The $y$-axes of these plots represent the approximate values of the copula PDF of $\log_{10}{XY} \bmod 1$ at various values of $x \in [0,1]$, where $X$ and $Y$ are the marginal distributions. The red line represents the Benford distribution.}\label{fig:amh1}
\end{figure}
\FloatBarrier

\begin{figure}[h]
\includegraphics[width=\linewidth]{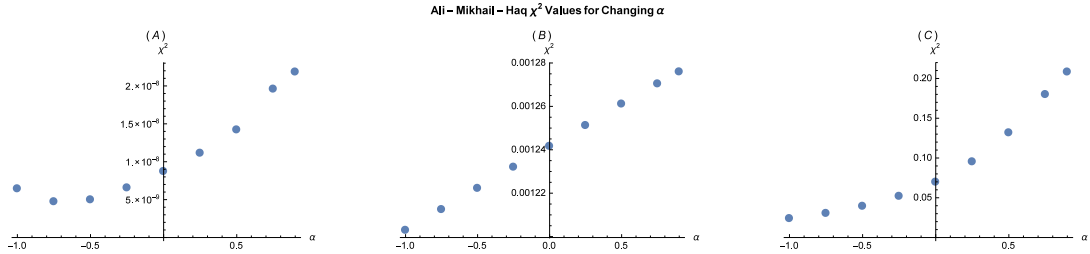}
\caption{The $\chi^{2}$ values associated to the the preceding sets of plots for the Ali-Mihkail-Haq copula. Each shows the comparison to Benford behavior as $\alpha$ increases. We have $11$ degrees of freedom and a significance level of $0.005$, so we reject the hypothesis if the value exceeds $2.6$. Clearly, only case $(C)$ comes close to rejecting the hypothesis.} \label{fig:chi1}
\end{figure}
\FloatBarrier

\par

\par
\subsubsection{Gumbel-Barnett Copula:}
These plots suggest that the Gumbel-Barnett copula undergoes even less change over $\alpha$ than the Ali-Mikhail-Haq copula. For pairings $(A)$ and $(B)$, the range for the plots must be restricted to $[0.9999,1.0001]$ and $[0.995,1.010]$, respectively, in order to show any change at all. Pairing $(C)$ is not nearly Benford, so its range is expected to vary (recall that the function described by each plot should integrate to 1 in the continuous case). We note, however, that the value at $s=0$ in pairing $(C)$ appears to vary over a range of $0.1$ as $\alpha$ increases. The $\chi^{2}$ plots in Figure \ref{fig:chi2} reinforce this interpretation, as in each case the values vary over a significantly small range.
\par
This lack of variation is likely due to the actual formula of the copula,
\begin{equation}\label{eq:gumbelbarnett}
C(x,y) \ = \ xy e^{-\alpha xy}
\end{equation}
 In this case, we have the independence copula, $C(x,y)=xy$ multiplied by a monotonic transformation of the independence copula, $e^{-a xy}$. Thus, it is possible that one or both of these elements serves to preserve the Benford properties of the marginals.
\FloatBarrier
\begin{figure}
\begin{minipage}[t]{0.32\linewidth}
\includegraphics[width=\linewidth,height=1.5\linewidth]{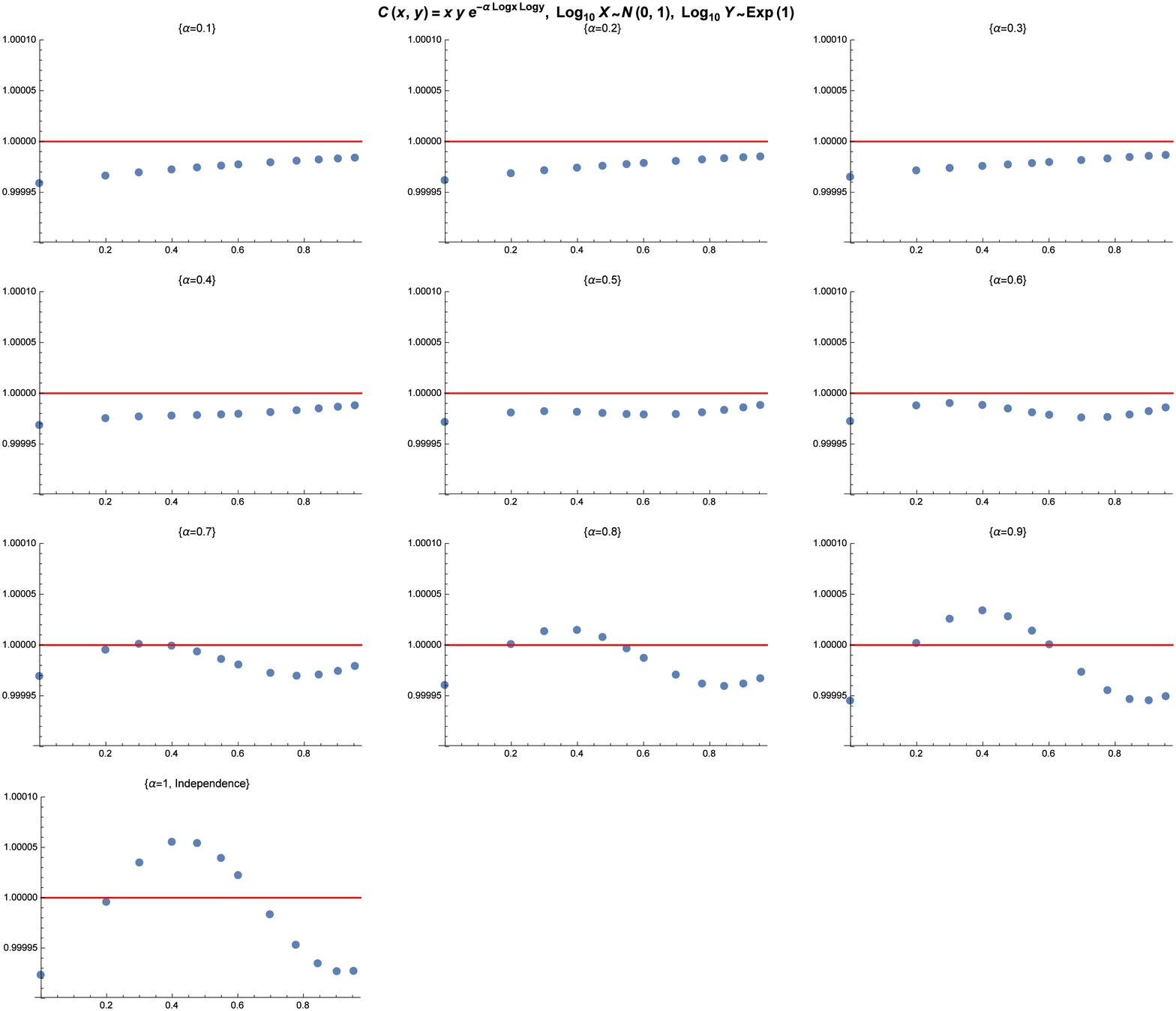}
\end{minipage}
\begin{minipage}[t]{0.32\linewidth}
\includegraphics[width=\linewidth,height=1.5\linewidth]{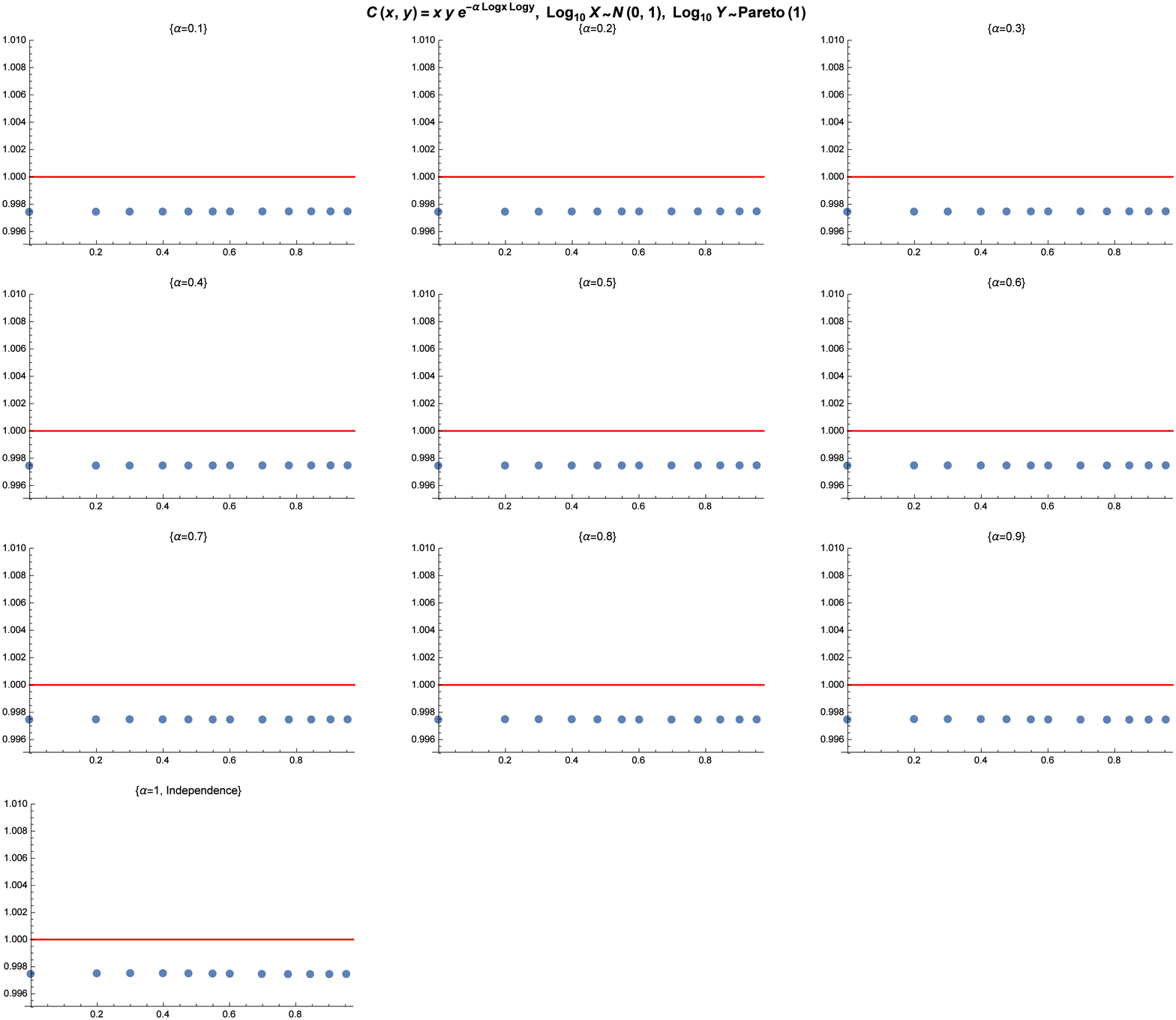}
\end{minipage}
\begin{minipage}[t]{0.32\linewidth}
\includegraphics[width=\linewidth,height=1.5\linewidth]{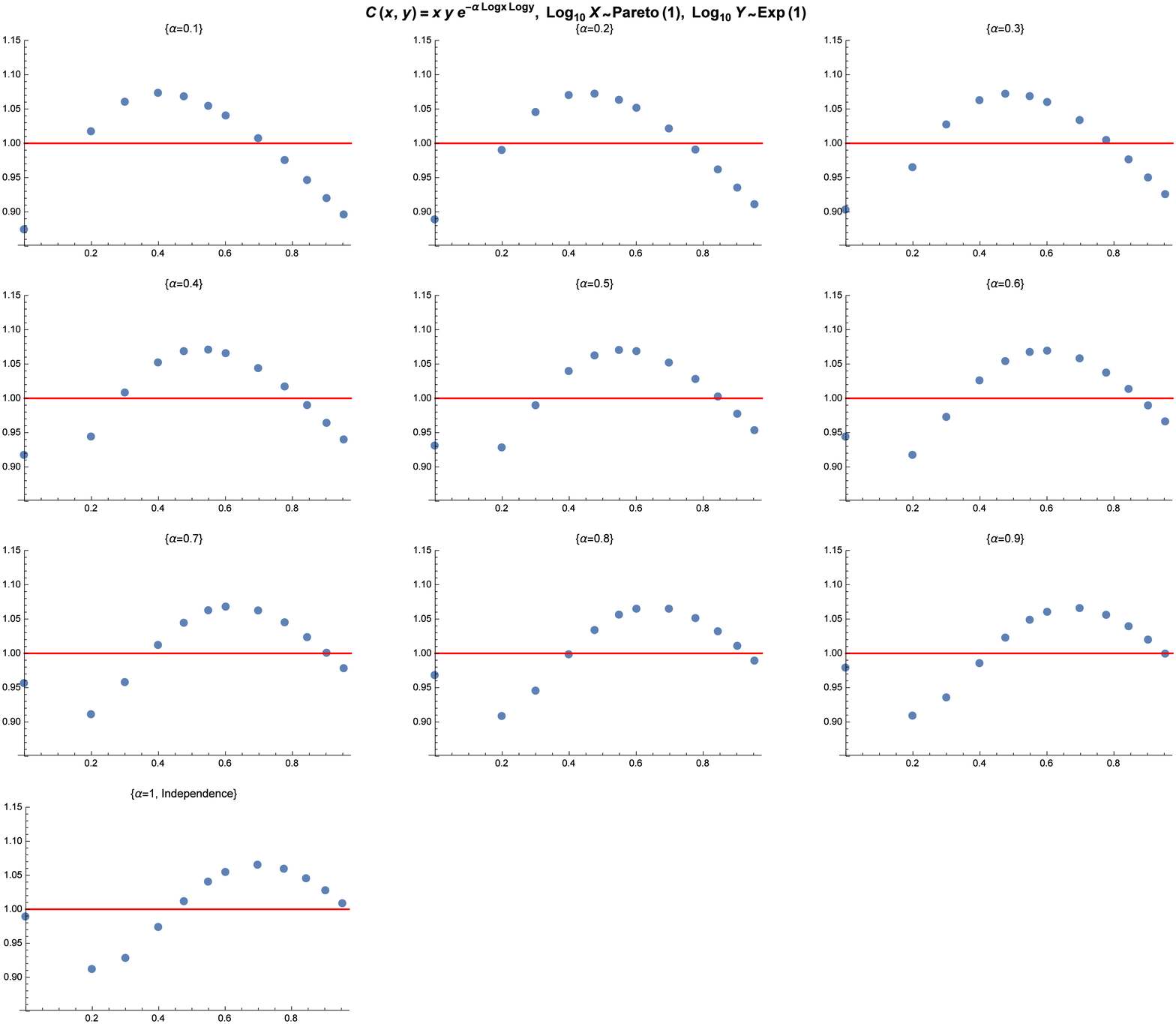}
\end{minipage}
\caption{The Gumbel-Barnett 2-copula (see Definition \ref{def:gb}) modeled on three different sets of marginals with varying dependence parameter $\alpha \in (0,1]$. The $y$-axes of these plots represent the approximate values of the copula PDF of $\log_{10}{XY} \bmod 1$ at various values of $x \in [0,1]$, where $X$ and $Y$ are the marginal distributions. The red line represents the Benford distribution.}\label{fig:gumho1}
\end{figure}
\FloatBarrier

\begin{figure}[h]
\includegraphics[width=\linewidth]{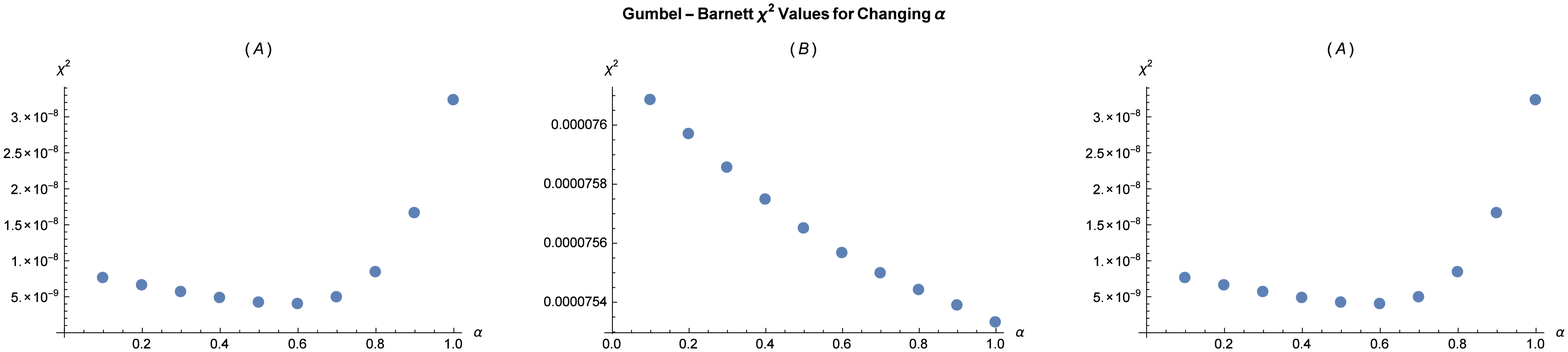}
\caption{The $\chi^{2}$ values associated to the the preceding sets of plots for the Gumbel-Barnett copula. Each shows the comparison to Benford behavior as $\alpha$ increases. We have $11$ degrees of freedom and a significance level of $0.005$, so we reject the hypothesis if the value exceeds $2.6$. Despite the apparent variation, none of these cases approach the critical value.} \label{fig:chi2}
\end{figure}
\FloatBarrier

\par
\subsubsection{Clayton Copula:}
Unlike the previous two examples, the Clayton copula shows notable variance over $\alpha$. Although it is not shown here, the independence case for Clayton copulas is $\alpha = 0$. For pairings $(A)$ and $(B)$, it appears that the plots diverge farther and farther away from $y=1$ as $\alpha$ moves away from $0$. For pairing $(C)$, the plots appear to get more random as $\alpha$ grows, and there is no suggestion that Benford behavior may develop as we depart from independence. Furthermore, the plots in Figure \ref{fig:chi3} show $\chi^{2}$ values that are significantly higher than those seen for the previous two copulas, suggesting that the dependence imposed by Clayton copula tends to heavily alter any Benford behavior of the marginals.
\FloatBarrier
\begin{figure}
\begin{minipage}[t]{0.32\linewidth}
\includegraphics[width=\linewidth,height=1.5\linewidth]{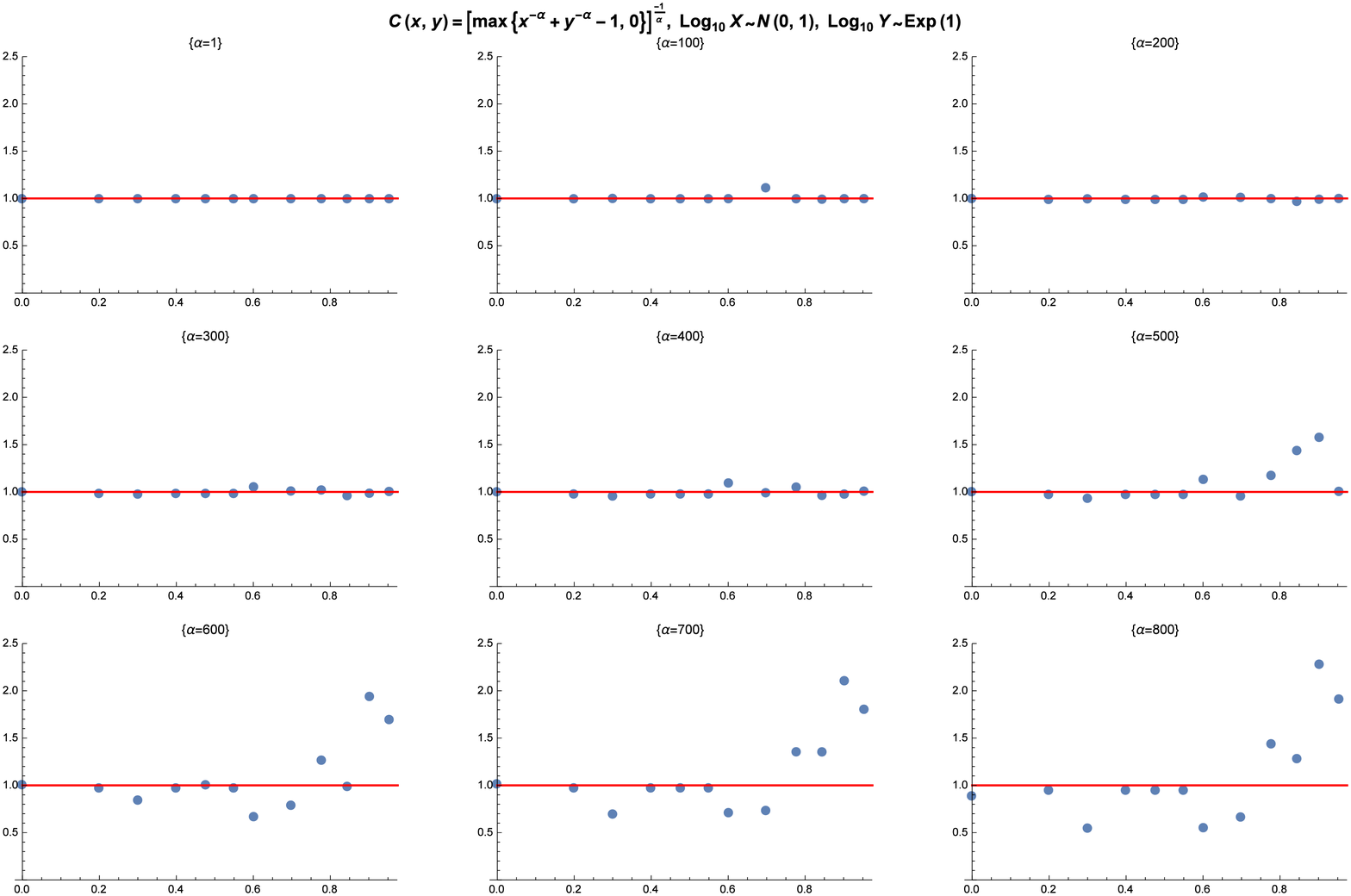}
\end{minipage}
\begin{minipage}[t]{0.32\linewidth}
\includegraphics[width=\linewidth,height=1.5\linewidth]{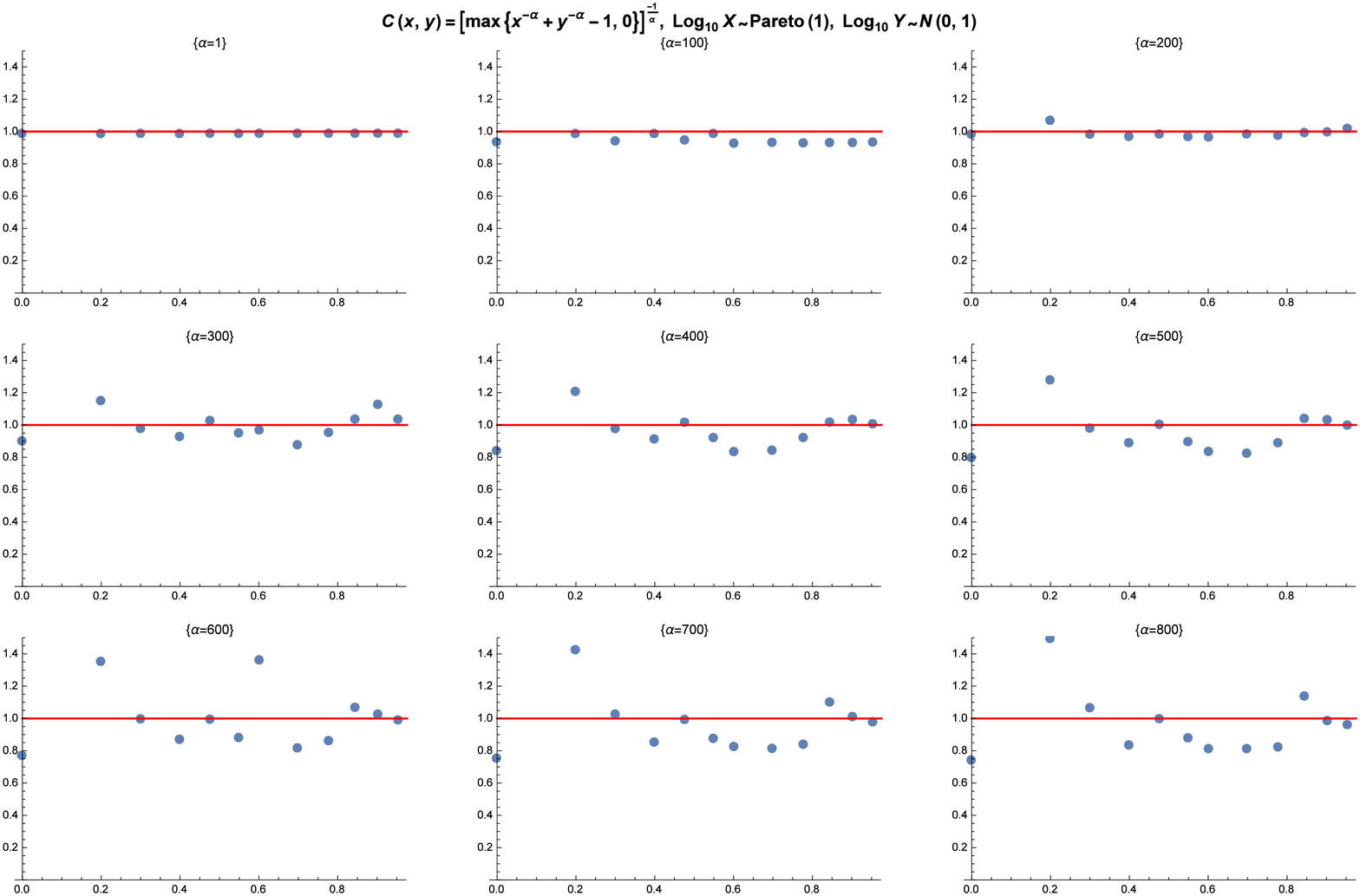}
\end{minipage}
\begin{minipage}[t]{0.32\linewidth}
\includegraphics[width=\linewidth,height=1.5\linewidth]{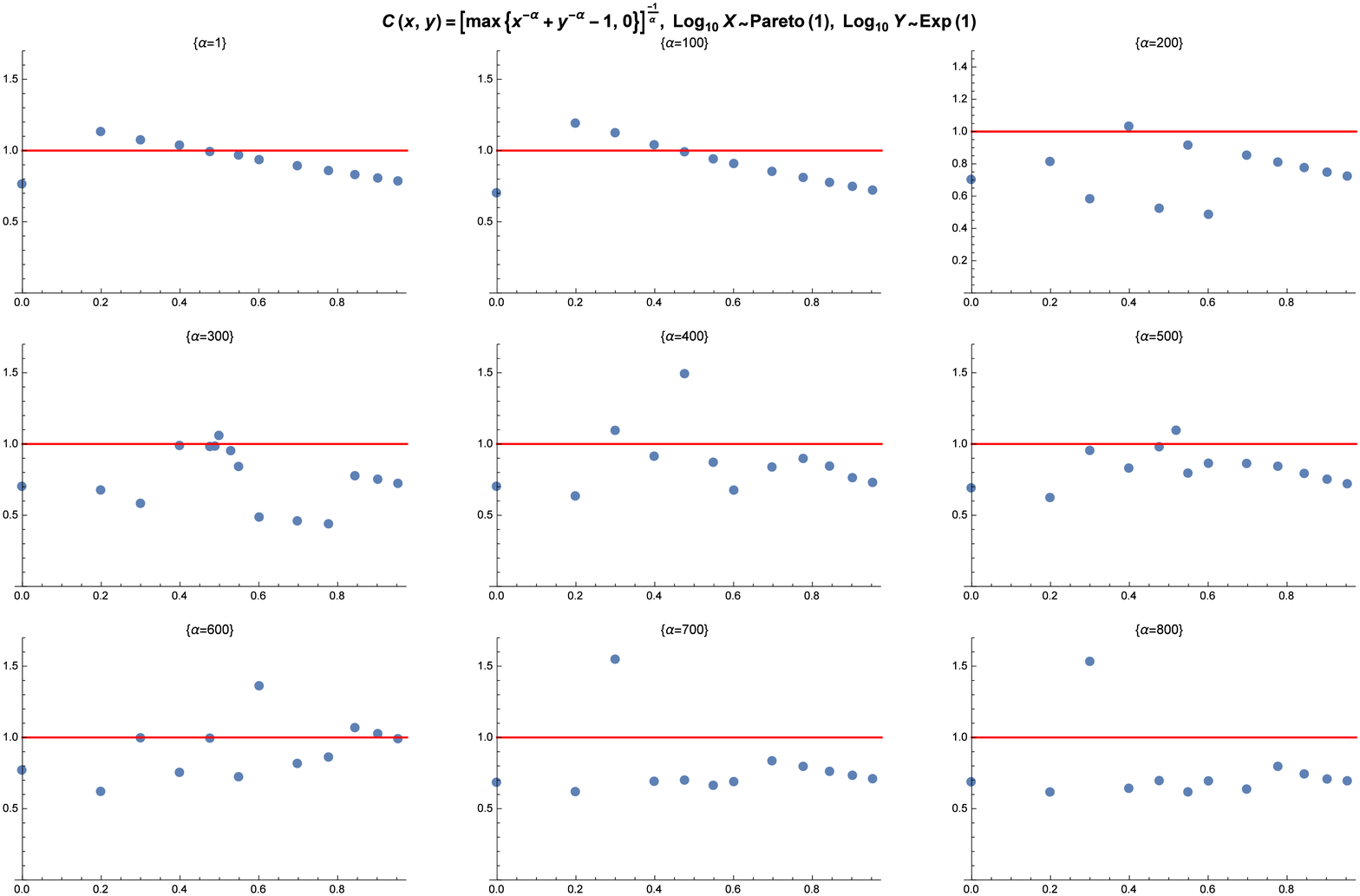}
\end{minipage}
\caption{The Gumbel-Barnett 2-copula (see Definition \ref{def:gb}) modeled on three different sets of marginals with varying dependence parameter $\alpha \in (0,1]$. The $y$-axes of these plots represent the approximate values of the copula PDF of $\log_{10}{XY} \bmod 1$ at various values of $x \in [0,1]$, where $X$ and $Y$ are the marginal distributions. The red line represents the Benford distribution.}\label{fig:clayton1}
\end{figure}
\FloatBarrier

\begin{figure}[h]
\includegraphics[width=\linewidth]{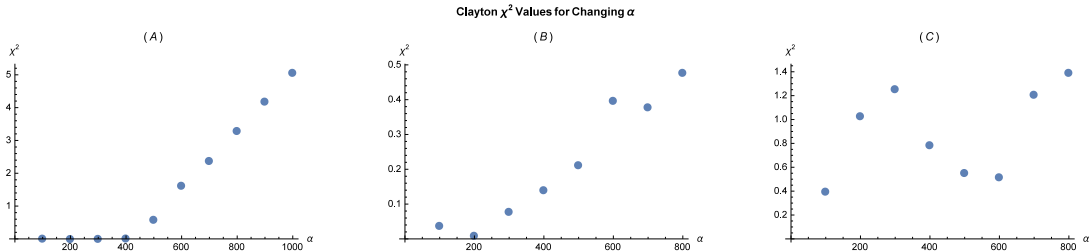}
\caption{The $\chi^{2}$ values associated to the the preceding sets of plots for the Clayton copula. Each shows the comparison to Benford behavior as $\alpha$ increases. We have $11$ degrees of freedom and a significance level of $0.005$, so we reject the hypothesis if the value exceeds $2.6$. Unlike the previous two copulas, only case $(B)$ stays below the critical value. However, the behavior of the plot suggests it will quickly surpass the critical value as $\alpha$ continues to increase.} \label{fig:chi3}
\end{figure}
\FloatBarrier

\par
The results from these three copulas suggest that the preservation of Benford behavior relies more heavily on the underlying structure of the copula than on the Benford behavior of the marginals. Both the Ali-Mikhail-Haq copula and the Gumbel-Barnett copula formulas contain the independence copula, $C(x,y)=xy$. The Clayton copula, however, does not contain the independence copula and is also the only copula of the three to show noticeable variation as the dependence parameter changes.
\subsection{$n$-Copulas}
The previous results suggest that the underlying copula structure has a strong influence on the Benford behavior of $2$-copulas. Thus the logical next step is to investigate whether this holds true as we increase the number of Marginals. For all $\chi^{2}$ tests, we have $8$ degrees of freedom and again take a significance level of $0.005$. In practice, this means we reject the hypothesis if the value exceeds $1.3$.
\par
We consider the most stable of the three previous copulas, the Gumbel-Barnett copula. We fix $\alpha = 0.1$ and set the log, base 10, of all marginals to be identically distributed according to the Normal distribution with mean $0$ and variance $1$, our most Benford-like marginal. We then consider cases where the copula has $2$ to $7$ marginals. We can see from Figure (\ref{fig:1257}) that the Benford behavior of the Gumbel-Barnett copula begins to fall apart as marginals are added. This is in direct contrast to what would be expected from a central-limit type property, which should become increasingly more uniform as variables are added. This is further reinforced by the $\chi^{2}$ values in Figure \ref{fig:chimulti} and suggests that the dependence structure imposed by the copula prevents any leveling-off from happening.
\begin{figure}[h]
\includegraphics[ width=0.5\textwidth,height=0.5\textwidth]{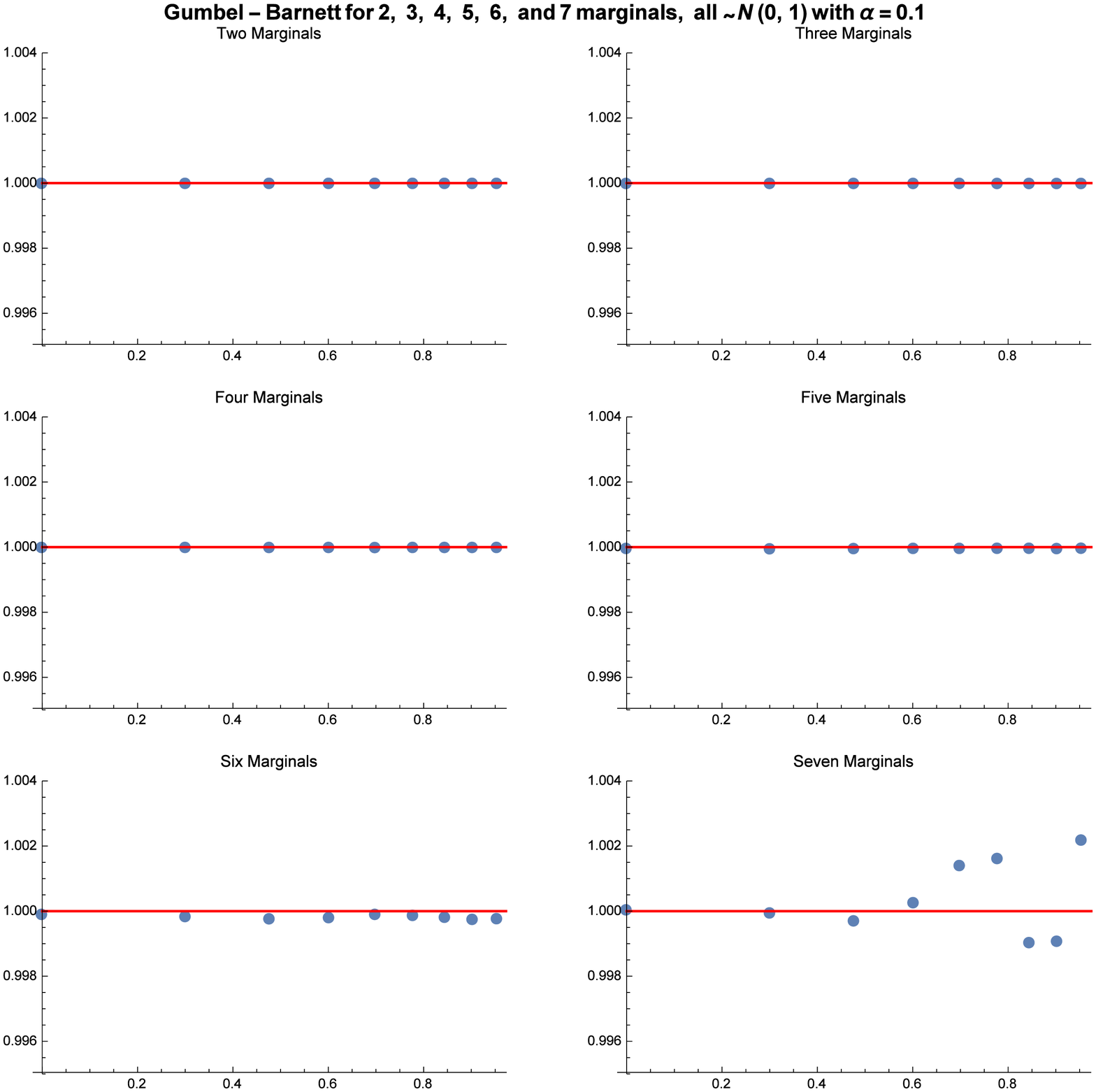}
\caption{Gumbel-Barnett copula with two to seven marginals}\label{fig:1257}
\end{figure}
\FloatBarrier

\begin{figure}[h]
\includegraphics[width=0.4\linewidth]{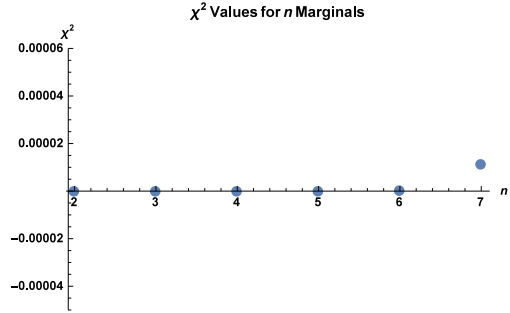}
\caption{The $\chi^{2}$ values comparing the behavior of the product to a Benford PDF as the number of marginals increases.  We have $8$ degrees of freedom and a significance level of $0.005$, so we reject the hypothesis if the value exceeds $1.3$.} \label{fig:chimulti}
\end{figure}
\FloatBarrier

\section{Benford Distance}\label{sec: sectionfour}
Now that we know that we can test for Benford behavior of a product, regardless of dependence, it would be prudent to know how often this behavior is expected to show up. In order to do this, we investigate if the absorptive property of Benford products is common in dependent random variables, or if its presence relies on some sort of proximity to independence.

To get an idea of this, let $\mathcal{W}$ be the space of all $n$-tuples of continuous random variables $(X_{1},X_{2},\dots,X_{n})$ for which at least one is Benford. Now let us assume that our set of marginals, $(X_{1},X_{2},\dots,X_{n})$, form an element in $\mathcal{W}$. Then we know that their product, assuming independence, will always be Benford.

From this, we can restrict our Benford distance, (\ref{eq:eps}), to $\mathcal{W}$ and define it as
\begin{eqnarray}\label{eq:epsW}
& & \epsilon_{s,W} \ = \ \nonumber\\
& & \left| \sum_{k=-\infty}^{\infty}\int_{u_{1}=-\infty}^{\infty}\cdots \int_{u_{n-1}=-\infty}^{\infty}\left(1 \ - \ \frac{\partial^{n}C(F_{1}(u_{1}),\dots,F_{n-1}(u_{n-1}),F_{n}(u_{n}))}{\partial{u_1}\partial{u_2}\cdots \partial{u_n}}\Big|_{\textbf{u}_0}du_{1}du_{2}\cdots du_{n-1}\right) \  \right|, \nonumber\\
\end{eqnarray}
where $\textbf{u}_0$ is defined as in Lemma \ref{thm: benfsolve}.
Therefore, our problem becomes to minimize the value of $\epsilon_{s,W}=0,$ as a proximity to $0$ should indicate proximity to a Benford distribution.

\subsection{Cases that are $\epsilon$ away from Benford.}

Rather than directly calculating the value of $\epsilon_{s,W}$, it may often be more convenient to provide a bound that depends only on the copula $C$. Note that if the value of $\frac{\partial^{n}C(F_{1}(u_{1}),\dots,F_{n-1}(u_{n-1}),F_{n}(u_{n}))}{\partial{u_1}tial{u_2}\cdots \partial{u_n}}$ is identically $1$ for all values of $(u_{1},u_{2},\dots,u_{n})$, then the value of $\epsilon_{s,W}$ will be identically $0$ and our product will be Benford. Even though this case does not cover all situations in which our product will be Benford, it suggests that a product's distance from Benford may be related to the distance between the function $\frac{\partial^{n}C(F_{1}(u_{1}),\dots,F_{n-1}(u_{n-1}),F_{n}(u_{n}))}{\partial{u_1}\partial{u_2}\cdots \partial{u_n}}$ and the constant function, $1$. This brings us to the main result of this section.

\begin{thm}\label{thm:L1normBounds}
Let $X_{1},X_{2},\dots,X_{n}$ be continuous random variables where $(X_{1},X_{2},\dots,X_{n}) \in \mathcal{W}$. Assume also that they are jointly described by a copula $C$, where the function $N(u_{1},u_{2},\dots,u_{n}) = 1-\frac{\partial^{n}C(F_{1}(u_{1}),\dots,F_{n-1}(u_{n-1}),F_{n}(u_{n}))}{\partial{u_1}\partial{u_2}\cdots \partial{u_n}}$ is in $L^{1}(\mathbb{R}^{n})$. Let $U_{i}=\log_{B}{X_{i}}$ for each $i$ and some base, $B$, and let $F_{i}$ be the CDFs of $U_{i}$ for each $i$. Then the \textit{$L^{1}$ distance from Benford}, defined by
\begin{equation}\label{eq:accDist}
\int_{0}^{1} \left| \sum_{k=-\infty}^{\infty}\int_{u_{1}=-\infty}^{\infty}\cdots \int_{u_{n-1}=-\infty}^{\infty} \left( 1 \ - \ \frac{\partial^{n}C(F_{1}(u_{1}),\dots,F_{n-1}(u_{n-1}),F_{n}(u_{n}))}{\partial{u_1}tial{u_2}\cdots \partial{u_n}}\Big|_{\textbf{u}_{0}}\right)du_{1}\cdots du_{n-1} \ \right| ds
\end{equation}
is bounded above by the $L^{1}$ norm of $N$. In other words
\begin{align}\label{eq: L1Bound}
\nonumber & \int_{0}^{1} \left| \sum_{k=-\infty}^{\infty}\int_{u_{1}=-\infty}^{\infty}\cdots \int_{u_{n-1}=-\infty}^{\infty} \left( 1 \ - \ \frac{\partial^{n}C(F_{1}(u_{1}),\dots,F_{n-1}(u_{n-1}),F_{n}(u_{n}))}{\partial{u_1}tial{u_2}\cdots \partial{u_n}}\Big|_{\textbf{u}_{0}}\right)du_{1}\cdots du_{n-1} \ \right| ds \\
& \ \leq \ \|1 \ - \ \frac{\partial^{n}C(F_{1}(u_{1}),\dots,F_{n-1}(u_{n-1}),F_{n}(u_{n}))}{\partial{u_1}\partial{u_2}\cdots \partial{u_n}}\|_{L^{1}}.
\end{align}
\end{thm}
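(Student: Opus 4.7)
The plan is to move the absolute value inside the sum and the $(n-1)$-fold integral, interchange the remaining $ds$ integration with the sum over $k$ and the integrals over $u_1,\dots,u_{n-1}$, and then recognize that the combined operation $\sum_{k\in\Z}\int_0^1(\cdot)\,ds$ applied to a function of $u_n = s+k-(u_1+\cdots+u_{n-1})$ is exactly $\int_{\R}(\cdot)\,du_n$. After this change of variable the iterated integral becomes the full integral of $|N|$ over $\R^n$, which is $\|N\|_{L^1}$.

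Concretely, I would proceed in three steps. First, apply the triangle inequality pointwise in $s$, pulling the absolute value inside both the sum and the $(n-1)$-fold integral, to obtain
\[
\int_0^1 \Big|\sum_{k\in\Z}\int_{\R}\!\!\cdots\!\!\int_{\R} N(\mathbf{u}_0)\,du_1\cdots du_{n-1}\Big|\,ds \;\le\; \int_0^1 \sum_{k\in\Z}\int_{\R}\!\!\cdots\!\!\int_{\R} |N(\mathbf{u}_0)|\,du_1\cdots du_{n-1}\,ds.
\]
Second, since the integrand is now non-negative and $N\in L^1(\R^n)$ by hypothesis, Tonelli's theorem lets me swap the order of the $ds$ integral, the sum over $k$, and the integrals over $u_1,\dots,u_{n-1}$ freely.

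Third, for each fixed $(u_1,\dots,u_{n-1})\in\R^{n-1}$, perform the substitution $u_n = s + k - (u_1+\cdots+u_{n-1})$ in the innermost $\sum_{k}\int_0^1(\cdot)\,ds$. As $s$ varies over $[0,1]$ and $k$ varies over $\Z$, the value $u_n$ covers $\R$ exactly once (the intervals $[k-(u_1+\cdots+u_{n-1}),\,k+1-(u_1+\cdots+u_{n-1})]$ tile $\R$ as $k$ runs over $\Z$), and the Jacobian is $1$. Thus
\[
\sum_{k\in\Z}\int_0^1 |N(u_1,\dots,u_{n-1},s+k-\textstyle\sum_i u_i)|\,ds \;=\; \int_{\R} |N(u_1,\dots,u_{n-1},u_n)|\,du_n.
\]
Reassembling with the outer integrals over $u_1,\dots,u_{n-1}$ then yields $\int_{\R^n}|N(u_1,\dots,u_n)|\,du_1\cdots du_n = \|N\|_{L^1}$, which is the desired bound.

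The main obstacle is essentially bookkeeping: verifying rigorously that the $[0,1]$-intervals indexed by $k$ tile $\R$ under the affine substitution, and checking that Tonelli applies with no integrability issues (which is guaranteed by the hypothesis $N\in L^1(\R^n)$). No deeper estimate is needed, since the triangle inequality and the tiling argument are both tight in exactly the case when $N\equiv 0$, matching the earlier observation that $\frac{\partial^n C}{\partial u_1\cdots\partial u_n}\equiv 1$ forces Benford behavior.
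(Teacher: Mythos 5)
Your proposal is correct and follows essentially the same route as the paper: triangle inequality to move the absolute value inside, Fubini/Tonelli to interchange the sum and integrals, and the observation that the sets $\{u_n = s+k-(u_1+\cdots+u_{n-1}) : s\in[0,1],\ k\in\ZZ\}$ tile $\RR$, which is exactly the paper's picture of integrating over the strips between the lines $A_k$ and $A_{k+1}$ and summing over $k$ to cover the whole space. The only cosmetic difference is that the paper finishes with a change of variables (its Lemma \ref{lem:L1Lemma}) identifying $\int_{\RR^n}|N|$ with the $L^1$ norm of $1-C_{u_1\cdots u_n}$ over the unit cube, a step your reading of $\|N\|_{L^1}$ as the $L^1(\RR^n)$ norm makes unnecessary.
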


We prove this for the two-dimensional case, as the results in $n$-dimensions proceed similarly. We need the following result (see Appendix \ref{sec:Appendix 1} for a proof).

\begin{lem}\label{lem:L1Lemma}
Given $C_{uv}$, $F(u)$, and $G(v)$ as defined before, we have
\begin{equation}
\| 1 - C_{uv}(u,v) \|_{L^{1}} \ = \ \int_{-\infty}^{\infty}\int_{-\infty}^{\infty}f(u)g(v)|1-C_{uv}(F(u),G(v))|dudv.
\end{equation}
\end{lem}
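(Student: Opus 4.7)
The plan is to prove Lemma~\ref{lem:L1Lemma} by a direct change of variables. First, I would unfold the $L^{1}$ norm on the left-hand side: since $C_{uv} = \partial^{2}C/\partial u\, \partial v$ is the copula density and lives on the unit square $[0,1]^{2}$, we have
\[
\| 1 - C_{uv}(u,v) \|_{L^{1}} \ = \ \int_{0}^{1}\!\!\int_{0}^{1} | 1 - C_{uv}(s,t) | \, ds \, dt.
\]
The goal is then to exhibit the right-hand side of the lemma as this same double integral, rewritten after pulling back from $[0,1]^{2}$ to $\mathbb{R}^{2}$ via the marginal CDFs.

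Next, I would apply the substitution $s = F(u)$, $t = G(v)$. Because $F$ and $G$ are absolutely continuous CDFs with densities $f$ and $g$, the probability integral transform pushes $f(u)\,du$ and $g(v)\,dv$ forward to the uniform measure on $[0,1]$, so $ds = f(u)\,du$ and $dt = g(v)\,dv$. Applying this substitution coordinate by coordinate via Fubini yields
\[
\int_{0}^{1}\!\!\int_{0}^{1} | 1 - C_{uv}(s,t) | \, ds \, dt \ = \ \int_{-\infty}^{\infty}\!\!\int_{-\infty}^{\infty} f(u) g(v) \, | 1 - C_{uv}(F(u), G(v)) | \, du \, dv,
\]
which is precisely the right-hand side of the lemma. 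Since $f, g \geq 0$, the Jacobian $f(u) g(v)$ passes freely through the absolute value, so no sign issues arise.

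The main technical subtlety, and the only nontrivial step, is that $F$ or $G$ may fail to be strictly monotone on subintervals where its density vanishes, so the map $(u,v) \mapsto (F(u), G(v))$ need not be a global diffeomorphism. I would handle this by noting that on the set $\{f(u) = 0\} \cup \{g(v) = 0\}$ the integrand on the right-hand side is identically zero, while the complement of $F(\mathbb{R}) \times G(\mathbb{R})$ inside $[0,1]^{2}$ has Lebesgue measure zero. Thus the substitution is a bijection almost everywhere with respect to the relevant measures, which is exactly the hypothesis under which the standard Lebesgue change-of-variables theorem applies. Viewed probabilistically, the argument amounts to the familiar fact that if $U, V$ are independent random variables with densities $f, g$, then $(F(U), G(V))$ is uniformly distributed on $[0,1]^{2}$; taking the expectation of $|1 - C_{uv}(\cdot, \cdot)|$ in the two equivalent ways gives the claimed equality.
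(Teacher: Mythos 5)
Your proposal is correct and follows essentially the same route as the paper: write the $L^{1}$ norm as $\int_{0}^{1}\int_{0}^{1}|1-C_{uv}(s,t)|\,ds\,dt$ and pull back to $\mathbb{R}^{2}$ via the substitution $s=F(u)$, $t=G(v)$ with $ds=f(u)\,du$, $dt=g(v)\,dv$. The only difference is that you also address the degenerate case where $F$ or $G$ fails to be strictly monotone, a point the paper's proof passes over silently.
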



\begin{proof}[Proof of Theorem \ref{thm:L1normBounds}]
From the positivity of $f$ and $g$ we have
\begin{align}\label{eq: L1start}
\nonumber&\int_{0}^{1} \left| \sum_{k=-\infty}^{\infty}\int_{-\infty}^{\infty}f(u)g(s+k-u)(1 \ - \ C_{uv}(F(u),G(s+k-u)))du \  \right|ds \\
& \ \leq \ \int_{0}^{1}  \sum_{k=-\infty}^{\infty}\int_{-\infty}^{\infty}f(u)g(s+k-u)|1 \ - \ C_{uv}(F(u),G(s+k-u))|du \  ds.
\end{align}

We investigate exactly what region \eqref{eq: L1start} covers. The lines shown in Figure \ref{fig:region} are the sets $A_{k} = \{(u,v) : v=s+k-u\}$. We integrate $f(u)g(s+k-u)(1-C_{uv}(F(u),G(s+k-u)))$ along each of these lines and sum the results over $k$. The shaded region shows the area covered when $A_{2}$ is integrated over $s$ from $0$ to $1$.

\begin{figure}[h]
\includegraphics[width=0.5\textwidth, height=0.5\textwidth]{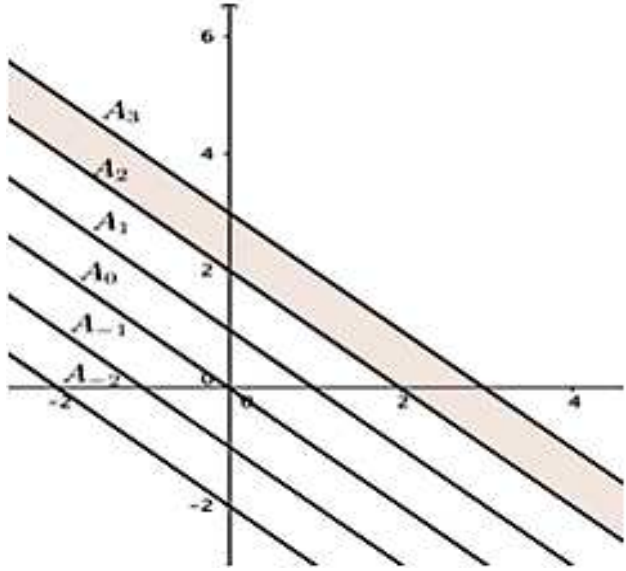}
\caption{The plane broken up into a few of the sections $A_{k}$.}\label{fig:region}
\end{figure}

As all of our sums and integrals converge absolutely, by Fubini's theorem we may switch our sum and integral in \eqref{eq: L1start} and get
\begin{align}\label{eq:switch}
\nonumber &\int_{0}^{1}  \sum_{k=-\infty}^{\infty}\int_{-\infty}^{\infty}f(u)g(s+k-u)|1 \ - \ C_{uv}(F(u),G(s+k-u))|du \  ds \\
& \ = \  \sum_{k=-\infty}^{\infty}\int_{0}^{1} \int_{-\infty}^{\infty}f(u)g(s+k-u)|1 \ - \ C_{uv}(F(u),G(s+k-u))|du \  ds.
\end{align}
From this, we can quickly see that for any $k$,
\begin{equation}\label{eq:stillL1Proof}
\int_{0}^{1} \int_{-\infty}^{\infty}f(u)g(s+k-u)|1 \ - \ C_{uv}(F(u),G(s+k-u))|du \  ds
\end{equation}
is the integral of $f(u)g(s+k-u)|1 \ - \ C_{uv}(F(u),G(s+k-u))|$ over a region in between and including $A_{k}$ and $A_{k+1}$, just like the shaded region in Figure \ref{fig:region}. Therefore, (\ref{eq:switch}) is the sum of the integrals of $f(u)g(s+k-u)|1 \ - \ C_{uv}(F(u),G(s+k-u))|$ over all of these (disjoint) regions (over all $k$), which is equivalent to integrating over all of $\mathbb{R}^{2}$, giving us
\begin{equation}
\int_{-\infty}^{\infty}\int_{-\infty}^{\infty}f(u)g(v)|1 \ - \ C_{uv}(F(u),G(v))|dudv.
\end{equation}
Finally, we know from Lemma \ref{lem:L1Lemma}, we know that this is equal to $\|1-C_{uv}(u,v)\|_{L^{1}}$.
\end{proof}

\subsection{Consequences of an $L^{1}$ bound in $\mathbb{R}^{2}$.}

What Theorem \ref{thm:L1normBounds} provides is a way to understand the behavior of our probabilities. To see this, let $\mathcal{S} \subset [0,1]$ be the region over which $\epsilon_{s,W}>\epsilon_{N}$. If $\epsilon_{s,W}$ is large on $\mathcal{S}$, then the measure of $\mathcal{S}$ must be small in order to conform to \eqref{eq: L1Bound}, which requires that if $\|1-C_{uv}(u,v)\|_{L^{1}} \leq \epsilon_{N}$, then $\int_{0}^{1}\epsilon_{s,W}ds \leq \epsilon_{N}$ as well. In fact, the following corollary proves that Theorem \ref{thm:L1normBounds} provides useful information regarding how large $|\mathcal{S}|$ can be.

\begin{cor}\label{thm:corollary}
Let $\mathcal{S} \subset [0,1]$ be the set $\{s : \epsilon_{s,W} \geq \epsilon \}$. Then
\begin{equation}\label{eq:fromMark}
|\mathcal{S}| \ \leq \ \frac{\|1-C_{uv}(u,v)\|_{L^{1}} }{\epsilon}.
\end{equation}
\end{cor}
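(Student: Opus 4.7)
The plan is to derive Corollary~\ref{thm:corollary} as a direct application of Markov's inequality to the nonnegative function $s \mapsto \epsilon_{s,W}$ on the unit interval, using Theorem~\ref{thm:L1normBounds} as the integrated control. The key observation is that $\epsilon_{s,W} \geq 0$ for every $s \in [0,1]$ (it is defined in \eqref{eq:epsW} as an absolute value), so standard level-set estimates apply.

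First I would unwind the definition of $\mathcal{S}$: on this set the integrand $\epsilon_{s,W}$ is bounded below by the constant $\epsilon$, while off of $\mathcal{S}$ it remains nonnegative. This yields the chain
\begin{equation*}
\int_{0}^{1} \epsilon_{s,W}\, ds \ \geq \ \int_{\mathcal{S}} \epsilon_{s,W}\, ds \ \geq \ \epsilon \cdot |\mathcal{S}|.
\end{equation*}
Next I would invoke Theorem~\ref{thm:L1normBounds}, which shows that the left-hand side is precisely the quantity bounded by $\|1 - C_{uv}(u,v)\|_{L^{1}}$; here I use that the expression inside the absolute value in \eqref{eq:accDist} is exactly $\epsilon_{s,W}$ as defined in \eqref{eq:epsW}. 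Combining the two inequalities gives
\begin{equation*}
\epsilon \cdot |\mathcal{S}| \ \leq \ \|1 - C_{uv}(u,v)\|_{L^{1}},
\end{equation*}
and dividing by $\epsilon$ produces the claimed bound.

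There is no serious obstacle here, since the corollary is essentially a repackaging of Theorem~\ref{thm:L1normBounds} in the language of tail probabilities via Markov's inequality. The only minor care needed is to verify measurability of $\mathcal{S}$ (which follows from continuity of $s \mapsto \epsilon_{s,W}$ under the absolute continuity and uniform continuity assumptions on $C$ and the $F_{i}$ already in force) and to make explicit that the $L^{1}$ bound on the full interval dominates the contribution from any sub-level set. A brief remark at the end could note that the bound is sharp in the trivial sense that equality is approached when $\epsilon_{s,W}$ concentrates on a single level set, and that the corollary is most informative when $\|1-C_{uv}(u,v)\|_{L^{1}}$ is small, i.e., when the copula density is close to the constant $1$ encountered in the independence case.
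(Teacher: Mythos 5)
Your proposal is correct and is essentially identical to the paper's own proof, which likewise applies Markov's inequality to $\epsilon_{s,W}$ on $[0,1]$ and then invokes Theorem \ref{thm:L1normBounds} to bound $\int_{0}^{1}\epsilon_{s,W}\,ds$ by $\|1-C_{uv}(u,v)\|_{L^{1}}$. Your added remarks on measurability and sharpness go slightly beyond the paper's terse two-line argument but do not change the route.
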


\begin{proof}
This result comes directly from Markov's Inequality:
\begin{equation}\label{eq:usingMARK}
|\{s : \epsilon_{s,W} \geq \epsilon \}| \ \leq\ \frac{1}{\epsilon}\int_{0}^{1}\epsilon_{s,W} \ \leq \ \frac{\|1-C_{uv}(u,v)\|_{L^{1}} }{\epsilon}.
\end{equation}
\end{proof}



\section{Applications, Future Work, and Conclusion}\label{sec: sectionfive}

\subsection{Fitting Copulas}

The results of Section \ref{sec: sectionthree} allow us to determine the Benford behavior of the product $n$ distributions jointly modeled by a specific copula. However, we may wish to go in the other direction and, instead, find a copula that best fits $n$ correlated data sets. Statisticians have several methods for testing the goodness-of-fit to find the best choice of copula in these situations (see \cite{Genest3} for some examples and an analysis of several forms of goodness-of-fit tests), but it is not known whether or not these goodness-of-fit tests take Benford behavior into account. That is to say, will the prescribed copula mimic the Benford behavior observed in the data?

\par
The results Section \ref{sec: sectionthreepointfive} have shown us that the product of the same set of marginals will not display the same Benford behavior when modeled by different copulas. Thus, Benford behavior is not guaranteed. A natural next step is to investigate how the goodness-of-fit of a copula  may or may not be correlated with how well it preserves the expected Benford behavior of the product of two or more marginals. A comparison between the $L^{1}$ norm and well-known goodness of fit tests would enable us to see whether or not a strong Benford fit corresponds to a well-fit distribution as a whole. Furthermore, if a stronger Benford fit may be shown to correspond to a smaller $L^1$ bound, then we may be able to define this bound as a new goodness of fit test for distributions with one or more Benford marginals.



\subsection{Conclusion}

In fields such as actuarial sciences and statistics Benford's law is useful for fraud detection. Furthermore, copulas are a highly effective tool for modelling systems with dependencies.  In Section \ref{sec: sectionthree} we demonstrated that Benford behavior for dependent variables modeled by a copula may be detected and therefore analyzed to investigate the product of the variables. Thus these results indicate that the Benford's law methods used by professionals on single-variate, and/or independent data sets are now at the disposal of individuals who wish to model dependent data via a copula. We then applied these results in Section \ref{sec: sectionthreepointfive} where we observed that the preservation of Benford behavior appears to rely more heavily on the structure of the copula than on the marginals.

Essentially, the results of Section \ref{sec: sectionthree} permit analyses like those carried out in \cite{Cuff} and \cite{D--} in which a known distribution, in these cases the Weibull distribution and the inverse-gamma distribution, is analyzed to determine the conditions under which Benford behavior should arise. Once these conditions are established, any non-Benford data set which is expected to come from such a distribution may be considered suspicious enough to warrant a fraud investigation. In the case of copulas, the results of Section \ref{sec: sectionthree} allow one to conduct this exact method of analysis on the product of $n$ random variables jointly modeled by a copula $C$.

Finally, in Section \ref{sec: sectionfour} we encountered a useful consequence of of considering a distribution's $L^{1}$ distance from a Benford distribution to determine a useful bound for this Benford distance. We determined that the Benford distance of a product of $n$ random variables will always be bounded above by the distance between the copula PDF and the class of copulas whose PDFs are identically $1$.


\appendix

\section{Proofs for supporting Lemmas and Theorems}\label{sec:Appendix 1}

\textbf{Proof of Lemma \ref{thm: benfsolve}.}

\textit{Given $X$ and $Y$ continuous random variables with joint distribution modeled by the absolutely continuous copula $C$, Let $U=\log_{B}{X}$ and $V=\log_{B}{Y}$, for some base, $B$, and let the (marginal) CDFs of $U$ and $V$ be $F(u)$ and $G(v)$, respectively. Also, let $f(u)$ and $g(v)$ be the PDFs of $U$ and $V$, respectively. Then}
\begin{align}
\nonumber&\Prob{(U+V)\mod{1} \leq s} \\
& = \ \int_{0}^{s} \left( \sum_{k=-\infty}^{\infty}\int_{-\infty}^{\infty}C_{uv}(F(u),G(s+k-u))f(u)g(s+k-u)du \right).
\end{align}
\textit{Therefore, the PDF of $(U+V)\mod{1}$ is given by}
\begin{equation}
\sum_{k=-\infty}^{\infty}\int_{-\infty}^{\infty}C_{uv}(F(u),G(s+k-u))f(u)g(s+k-u)du.
\end{equation}

\begin{proof}
By the invariance of copulas under monotonically increasing functions (Theorem \ref{thm:noChange}), we know that the joint CDF of $U$ and $V$ is given by the same copula as $X$ and $Y$. Thus, the joint CDF of $U$ and $V$ is given by
\begin{equation}
C(F(U),G(V)).
\end{equation}

Then, by definition, the joint PDF of $U$ and $V$ is given by the mixed partial derivative.
\begin{align}\label{eq:pdfUV}
\frac{\partial}{\partial{v}}\frac{\partial}{\partial{u}}C(F(u),G(v)) & \ = \ C_{uv}(F(u),G(v))f(u)g(v) \ + \ C_{u}(F(u),G(v))\frac{\partial}{\partial{v}}f(u) \nonumber\\
& \ = \ C_{uv}(F(u),G(v))f(u)g(v).
\end{align}
Note that we assume that $\frac{du}{dv}=0$ since all dependence between $U$ and $V$ is modeled by $C$.

Note, also, that $\Prob{XY \leq 10^{s}}=\Prob{(U+V) \leq s}$. Thus we have
\begin{align} \label{eq: cdfmod1}
\nonumber&\Prob{(U+V)\mod{1} \leq s} \\
& = \ \sum_{k=-\infty}^{\infty}\int_{u=-\infty}^{\infty}\int_{v=k-u}^{s+k-u}C_{uv}(F(u),G(v))f(u)g(v)dvdu.
\end{align}

If $XY$ is Benford, then (\ref{eq: cdfmod1}) will equal $s$ for all $s$. It is, however, easier to test the PDF then the CDF. So we differentiate with respect to $s$. Let $C_{1}(u,v)$ be the antiderivative of $C_{uv}(F(u),G(v))f(u)g(v)$ with respect to $v$. Then
\begin{align}
\nonumber& \frac{\partial}{\partial{s}}\sum_{k=-\infty}^{\infty}\int_{u=-\infty}^{\infty}\int_{v=k-u}^{s+k-u}C_{uv}(F(u),G(v))f(u)g(v)dvdu \\
\nonumber&  \ = \ \frac{\partial}{\partial{s}}\sum_{k=-\infty}^{\infty}(\int_{u=-\infty}^{\infty}(C_{1}(u,s+k-u)-C_{1}(u,k-u))du \\
& \ = \ \sum_{k=-\infty}^{\infty}\int_{-\infty}^{\infty}C_{uv}(F(u),G(s+k-u))f(u)g(s+k-u)du.
\end{align}
\end{proof}

\textbf{Proof of Lemma \ref{lem:convergence}.}

\textit{Given $U$ and $V$, continuous random variables modeled by the copula $C$ with marginals $F$ and $G$, respectively, there exist $a_{1}$, $a_{2}$, $b_{1}$, and $b_{2}$  completely dependent on $F$ or $G$ such that $a_{1}< a_{2}$ and $b_{1}<b_{2}$, and}
\begin{align}
\nonumber &\sum_{k=-\infty}^{\infty}\int_{-\infty}^{\infty}C_{uv}(F(u),G(s+k-u))f(u)g(s+k-u)du \\
&\ = \ \sum_{k=b_{1}}^{b_{2}}\int_{a_{1}}^{a_{2}}C_{uv}(F(u),G(s+k-u))f(u)g(s+k-u)du \ + \ E_{a,b}(s)
\end{align}
\textit{where $E_{a,b}(s) \to 0$ as $a_{1},b_{1} \to -\infty$ and $a_{2},b_{2} \to \infty$. Thus, for any $\epsilon > 0$, there exists $|a_{1}|$, $|a_{2}|$, $|b_{1}|$, and $|b_{2}|$ large enough such that $|E_{a,b}(s)| \leq \epsilon$.}

\begin{proof}
Since both the sum and the integral are convergent, the proof for $a_{1}$, $a_{2}$ and $b_{1}$, $b_{2}$ are nearly identical, so we only provide the work here for $a_{1}$ and $a_{2}$. The same steps may be used in the proof for $b_{1}$ and $b_{2}$. We also know that $C_{uv}(F(u),G(s+k-u))f(u)g(s+k-u)$ must go to $0$ as $u$ goes to $\pm \infty$ because of this convergence. Thus we choose to prove the case where $f$ and/or $g$ converge faster than $C_{uv}$. If $C_{uv}$ were to converge faster, the results derived here would still suffice.
We prove that for any $\epsilon >0$ we can find $a_{1}$ and $a_{2}$ such that, for all $u \leq a_{1}$ and all $u \geq a_{2}$, we have
\begin{equation*}
|C_{uv}(F(u),G(s+k-u))f(u)g(s+k-u)| \ \leq \ \epsilon.
\end{equation*}

Let $\epsilon > 0$, set $s$ and $k$ to be constant, and assume $C_{uv}$ is nonzero everywhere. If $C_{uv}$ is zero at any point, then we have a trivial case. Because $F$ and $G$ are CDFs, we know that $f \to 0$ as $u \to \pm \infty$ and $g\to 0$ as $ -u \to \pm \infty$, thus, we may choose $a_{f1}$, $a_{f2}$, $a_{g1}$, and $a_{g2}$ such that, for all $u \leq a_{f{1}}$ and all $u \geq a_{f2}$, we have
\begin{equation}\label{eq:fcontrol}
f(u) \ \leq\ \sqrt{\frac{\epsilon}{C_{uv}(F(u)G(s+k-u))}}.
\end{equation}
The same can be done for $g$ such that, for all $u \geq a_{g1}$ and all $u \leq a_{g2}$, we have
\begin{equation}
g(s+k-u) \ \leq\  \sqrt{\frac{\epsilon}{C_{uv}(F(u)G(s+k-u))}}.
\end{equation}
Thus, we let $a_{1} = \min\{a_{f1},a_{g1}\}$ and $a_{2} = \max\{a_{f2},a_{g2}\}$. then we have, for all $u \leq a_{1}$ and all $u \geq a_{2}$, we have
\begin{equation*}
|C_{uv}(F(u),G(s+k-u))f(u)g(s+k-u)| \ \leq \ \epsilon.
\end{equation*}
\end{proof}

\textbf{Proof of Lemma \ref{lem:L1Lemma}.}

\textit{Given $C_{uv}$, $F(u)$, and $G(v)$ as defined in Theorem \ref{thm:L1normBounds}, we have}
\begin{equation}
\| 1 - C_{uv}(u,v) \|_{L^{1}} \ = \ \int_{-\infty}^{\infty}\int_{-\infty}^{\infty}f(u)g(v)|1-C_{uv}(F(u),G(v))|dudv.
\end{equation}
\begin{proof}
We know that $u$ and $v$ are defined on $[0,1]$. Thus,
\begin{equation}
\| 1 - C_{uv}(u,v) \|_{L^{1}} \ = \ \ \int_{0}^{1}\int_{0}^{1}|1-C_{uv}(u,v)|dudv.
\end{equation}
However, by a simple change of variables $u \to F(u)$, $v \to G(v)$ (defended as CDFs, just like before, so their derivatives are $f(u)$ and $g(v)$, both of which are greater than or equal to 0), we get
\begin{equation}
\| 1 - C_{uv}(u,v) \|_{L^{1}} \ = \ \int_{-\infty}^{\infty}\int_{-\infty}^{\infty}f(u)g(v)|1-C_{uv}(F(u),G(v))|dudv.
\end{equation}
\end{proof}


\section{Computationally Testing for Benford Behavior: Examples}\label{sec:Appendix 2}
In this section, we use Clayton copulas (see Definition \ref{def:clayton}) to determine the Benford behavior of different combinations of marginals. We specifically look at marginals of the form $X=10^{U}$ and $Y=10^{V}$, where $U$ and $V$ are $\rm N[0,1]$ or $\rm Exp[1]$. In all analyses, we let $\alpha=2$ and $B=10$. We also provide the independence case for each set of marginals to allow for comparison.

\ \\

\noindent\textbf{\textit{Case 1: $U$ and $V$ $\sim N[0,1]$.}}

Given our definition of $X$ and $Y$, (\ref{eq:pdfmod1}) we first determine acceptable values for $a_{1}$, $b_{1}$, $a_{2}$. and $b_{2}$ by using an error analysis to test whether or not $-10$ and $10$ should be acceptable values for $a_{1}$ and $a_{2}$.

We generated a list of the error caused by truncating the integral at these values for various values of $s$. The first value of each triple in the list is $s$. The second is the lower error and the third is the upper error. To determine the error caused by truncating the integral, we used the approximation method detailed in Section \ref{sec: sectionthree}. As the list shows, the error is on the order of $10^{-22}$ or smaller, indicating that our selections for $a_{1}$ and $a_{2}$ are good bounds. We took the sum from $k=-20$ to $k=20$ because we know this will be sufficient, as indicated by the convergence in Figure (\ref{fig: plot2}) below.

\begin{verbatim}
In[262]:= errorsb =
 Table[{N[Log[10, s]], ea[Log[10, s]], eb[Log[10, s]]}, {s, 1, 9}]

Out[262]= {{0., 6.86784*10^-22, 1.28213*10^-22}, {0.30103,
  9.38169*10^-24, 1.28257*10^-22}, {0.477121, 3.03058*10^-25,
  1.28274*10^-22}, {0.60206, 2.74232*10^-26,
  1.28266*10^-22}, {0.69897, 4.3443*10^-27,
  1.28249*10^-22}, {0.778151, 9.77379*10^-28,
  1.28234*10^-22}, {0.845098, 2.79567*10^-28,
  1.28223*10^-22}, {0.90309, 9.52164*10^-29,
  1.28216*10^-22}, {0.954243, 3.70245*10^-29, 1.28213*10^-22}}
  \end{verbatim}

We now plot in Figure (\ref{fig: plot2}) the value of our truncated form of our PDF for different values of $s$. The line $y=1$ is included to demonstrate how close to $1$ our PDF is for all values of $s$, suggesting that the product of $X$ and $Y$, with joint PDF modeled by a Clayton copula with $\alpha=2$ should display Benford behavior.

\begin{figure}[h]
\includegraphics[width=0.6\textwidth, height= 0.43\textwidth]{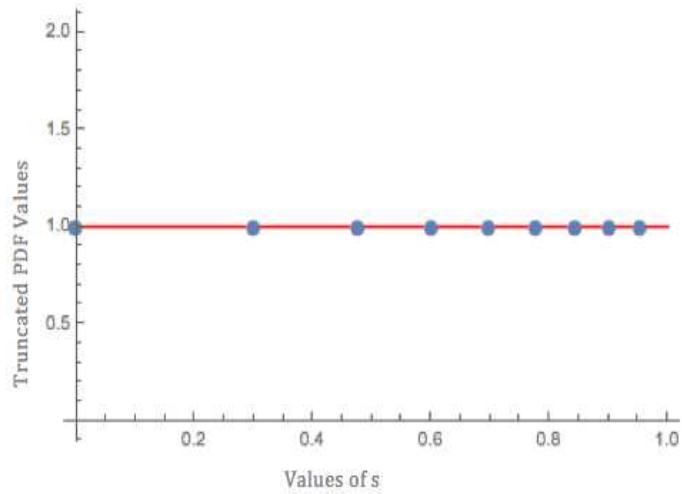}
\caption{$U \sim \rm{N[0,1]}$, $V \sim \rm{N[0,1]}$}\label{fig: plot2}
\end{figure}
\FloatBarrier

\ \\

\noindent\textbf{\textit{Case 2: $U$ $ \sim N[0,1]$ and $V$ $ \sim Exp[1]$.}}

A similar analysis as before was conducted on this new set of variables.  Through an identical analysis, we defined the bounds for our integral to be $a=-5$ and $b=10$, and provide the accumulated errors in the code below where the first term in each pair is $s$ and the second and third are the lower and upper errors, respectively.
As we can see, the errors are still very very small.

\begin{verbatim}
In[419]:= Table[{N[Log[10, s]], ea2[Log[10, s]], eb2[Log[10, s]]}, 
    {s, 1, 9}]

Out[419]= {{0., 3.30411*10^-21, 1.23628*10^-22}, {0.30103,
  2.43577*10^-21, 1.27151*10^-22}, {0.477121, 2.03887*10^-21,
  1.31758*10^-22}, {0.60206, 1.79746*10^-21,
  1.32924*10^-22}, {0.69897, 1.63021*10^-21,
  1.32387*10^-22}, {0.778151, 1.50526*10^-21,
  1.31045*10^-22}, {0.845098, 1.40717*10^-21,
  1.2933*10^-22}, {0.90309, 1.32741*10^-21,
  1.27456*10^-22}, {0.954243, 1.26084*10^-21, 1.25536*10^-22}}
\end{verbatim}

We now plot in Figure (\ref{fig: plot3}) the value of our truncated form of our PDF for various $s$.  We again note how close the PDF remains to $1$ for all values of $s$, suggesting that the product of $X$ and $Y$, with joint PDF modeled by a Clayton copula with $\alpha=2$ should display Benford behavior.

\begin{figure}[h]
\includegraphics[width=0.6\textwidth, height= 0.43\textwidth]{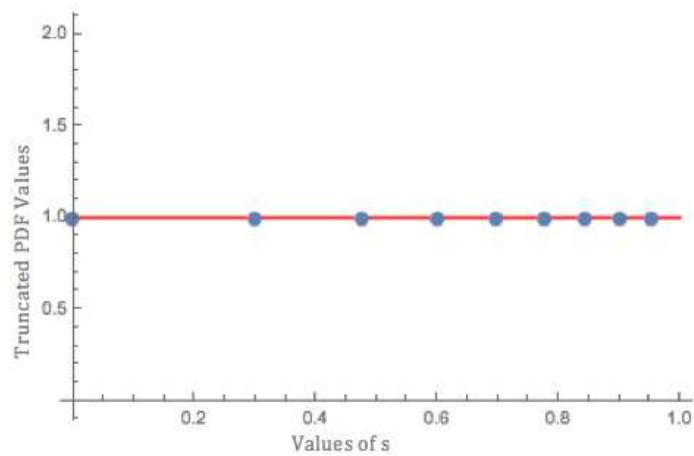}
\caption{$U \sim N[0,1],$ $V \sim Exp[1]$}\label{fig: plot3}
\end{figure}
\FloatBarrier

As before, this is backed up by the following simulation.
\\
\textbf{Simulation 2}

\ \\

\noindent\textbf{\textit{Case 3: $U$ $ \sim Exp[,1]$ and $V$ $ \sim Exp[1]$.}}

Finally, we conduct our analysis on the case of two exponentials. Our error terms for $a=25$ are generated in the code below (By inspection, we can tell that $C_{uv}(F(u),G(s+k-u))f(u)g(s+k-u)$ will be zero for negative values of $u$).  Again we choose $k$ from $0$ to $50$, and the first term in each pair is $s$.

\begin{verbatim}
In[363]:= Table[{N[Log[10, s]], N[eb1[Log[10, s]]]}, {s, 1, 9}]

Out[363]= {{0., 5.57839*10^-11}, {0.30103, 5.73736*10^-11}, {0.477121,
   5.94524*10^-11}, {0.60206, 5.99786*10^-11}, {0.69897,
  5.97362*10^-11}, {0.778151, 5.91306*10^-11}, {0.845098,
  5.83566*10^-11}, {0.90309, 5.75112*10^-11}, {0.954243,
  5.66447*10^-11}}
\end{verbatim}

Now that we know $a=25$ provides a small enough error, we plot, once again, the PDF for various values of $s$, as shown in Figure (\ref{fig: plot4}). We quickly see that the PDF does not converge to $1$ and actually changes for each value of $s$. Even though we only take our sum out to $k=\pm 50$, this is enough to suggest that Benford behavior is unlikely.

\begin{figure}[h]
\includegraphics[width=0.6\textwidth, height= 0.43\textwidth]{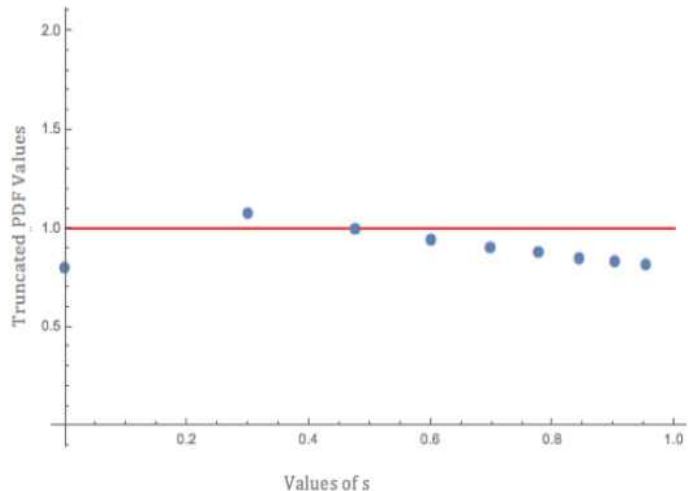}
\caption{$U \ {\rm and} \ V \sim {\rm Exp[1]}$}\label{fig: plot4}
\end{figure}
\FloatBarrier

\ \\

\noindent\textbf{Checking the Marginals}

To understand why this might be the case, we took a look at the marginal distributions. We note that $X = 10^U$ where $U \sim N[0,1]$ is a closely Benford distribution with $\chi^{2} \approx 0.9918$, but $Y = 10^V$ where $V \sim {\rm Exp[1]}$ is not, with $\chi^{2} \approx 0.7084$. Thus, in the independent case we would expect that two variables modeled like $X$, or any product with $X$, should yield a Benford distribution. The product of two variables modeled like $Y$, however, should not be Benford.


\ \\

\end{document}